\documentclass{article}
\usepackage{graphicx}
\usepackage{amsmath,amsfonts,amssymb,mathtools,amsthm}
\usepackage{url}
\usepackage{comment}
\usepackage{color}
\usepackage{subcaption}
\usepackage{algorithm}
\usepackage[noEnd=true]{algpseudocodex}
\usepackage{float}
\usepackage[shortlabels]{enumitem}
\usepackage{multirow}
\usepackage{diagbox}
\usepackage{hyperref}
\newtheorem{theorem}{Theorem}[section]
\newtheorem{proposition}[theorem]{Proposition}
\newtheorem{lemma}[theorem]{Lemma}

\newtheorem{claim}{Claim}
\theoremstyle{definition}
\newtheorem{Definition}[theorem]{Definition}
\newtheorem{example}[theorem]{Example}

\newtheorem{remark}[theorem]{Remark}

\usepackage[affil-it]{authblk}

\begin{document}
	
	\title{Stability Analysis of Generalized Multi-Source Weber Problems}
	
	\author[1,2]{N.H. Hai\thanks{Email: hoanghai10ly@gmail.com}}
	\author[1,2]{V.S.T. Long\thanks{Email: vstlong@hcmus.edu.vn}}
	\author[3]{N.M. Nam\thanks{Email: mnn3@pdx.edu}}
	\author[4]{N.D. Yen\thanks{Email:
			ndyen@math.ac.vn}}
	
	\affil[1]{Faculty of Mathematics and Computer Science, University of Science, Ho Chi Minh City, Vietnam}
	\affil[2]{Vietnam National University, Ho Chi Minh City, Vietnam}
	\affil[3]{Fariborz Maseeh Department of Mathematics and Statistics, Portland State University, Portland, OR 97207, USA} 
	\affil[4]{Institute of Mathematics, Vietnam Academy of Science and Technology, 18 Hoang Quoc Viet, Hanoi 10307, Vietnam}
	%\date{} % Bỏ ngày nếu không cần thiết
	\maketitle

	\begin{abstract}
		\noindent This paper presents a stability analysis of the generalized multi-source Weber problems under the influence of data perturbation within the framework of the Minkowski function. First, we establish explicit Lipschitz constants for the objective functions and optimal value functions. Then, we prove several properties of the global optimal solution sets. Furthermore, we provide sufficient conditions for the upper semicontinuity of global solution mappings and the inner semicontinuity of local solution mappings. Three illustrative examples are constructed.
		Our results give indirect answers to several open questions regarding the stability of local optimal solution mappings of the optimization problems discussed.		
	\end{abstract}
	
	\noindent\textbf{Keywords:} Generalized multi-source Weber problem, optimal value function, global solution map, local solution map, continuity properties.
	
	\noindent\textbf{2020 Mathematics Subject Classification:} 68T10, 90C26, 90C31, 90C90.

	\section{Introduction}
	
	The origins of location science can be traced back to the seventeenth century, beginning with Fermat's problem of minimizing the sum of distances to three given points in the plane, later solved by Torricelli and now known as the Fermat--Torricelli problem. This fundamental model--minimizing aggregate distance --has evolved from a purely geometric construction into a central problem of facility location theory, with applications ranging from logistics and transportation planning to data analysis and clustering; see, for example,~\cite{Aggarwal2014,AragonArtacho2022,Bagirov2011,Bagirov2006,  Brusco2006, cuong2023global,CTWYOptim,cuong2020qualitative,Jain2010, Kantardzic2011,Kuhn1973,Martini2002,Mordukhovich2010,Nam2017,Nam2018, Ordin2015} and the references therein.
	
	In recent years, considerable attention has been devoted to the multi-source Weber problem, which constitutes a significant extension of the classical Fermat--Torricelli model. The problem has been investigated by Cuong,~T.H., Yao,~J.-C., Yen, N.D., and their coauthors, from both theoretical and numerical aspects. In particular, several sufficient conditions ensuring the existence of global and local solutions were established in~\cite{cuong2023global,CTWYOptim,cuong2020qualitative}. Beyond existence results, various stability properties of the optimal value function and of both global and local solution mappings were analyzed in~\cite{Cuong2024,cuong2020qualitative}. 
	
	It is worth noting that most classical location models rely on the Euclidean norm. However, practical applications often require more flexible distance measures in order to capture different distance preferences, unequal transportation costs, or varying operating conditions. The observation has recently motivated a shift toward the framework of Minkowski function, which provides a robust and unified generalization encompassing a wide class of norms. Within that new setting, \textit{generalized multi-source Weber problems} (GMWPs) have been studied with respect to solution existence and numerical algorithms; see, for instance,~\cite{LNTV}. However, the sensitivity of optimal solutions to data perturbation—a crucial question since real-world data often contains uncertainty—has not been studied until now. This paper fills that gap by providing a detailed stability analysis of GMWPs.
	
	The paper is organized as follows. Section~\ref{sec2} presents some concepts and results of convex analysis and set-valued analysis, along with a formulation of generalized multi-source Weber problems. In Section~\ref{sec3}, we prove the Lipschitz continuity of the objective functions and the optimal value functions of the problems. Section~\ref{sec4} is devoted to the study of the global optimal solution sets. Specifically, we first show that the solution sets lie within certain compact sets, and then provide sufficient conditions for the upper semicontinuity (or lack of upper semicontinuity) of the global solution mappings. In Section~\ref{sec5}, we obtain conditions guaranteeing the inner semicontinuity of local solution mappings. The paper concludes with final remarks in Section~\ref{sec6}.
	
	Throughout the paper, the set of positive integers and the set of real numbers are denoted by $\mathbb{N}$ and $\mathbb{R}$, respectively. For every $d \in \mathbb{N}$, the Euclidean norm on $\mathbb{R}^d$ is denoted by $\|\cdot\|$. 
	The closed ball and the open Euclidean ball centered at $c \in \mathbb{R}^d$ with radius $r > 0$ are denoted by $\mathbb{B}[c; r]$ and $\mathbb{B}(c; r)$, respectively. 
	For a set $C \subset \mathbb{R}^d$, we use $\operatorname{int}(C)$, $\operatorname{cl}(C)$, and $\operatorname{bd}(C)$ to denote, respectively, the interior, the closure, and the boundary of~$C$.

	\section{Auxiliary results and problem settings}
	\label{sec2}
	
	In what follows, let $\mathbb{R}^{n}$, $\mathbb{R}^{nm}$, and $\mathbb{R}^{nk}$, where $n,m,k\in \mathbb{N}$, be equipped with the Euclidean norm.
	
	\subsection{Auxiliary results}
	The following concepts and results are important for our further studies. For more details, the interested reader can refer to \cite{Bauschke,boyd,mordukhovich2023easy,Hoangtuy}.
	
	\begin{Definition}\label{Minkowski_f}
		Let $F$ be a nonempty compact convex subset of $\mathbb R^n$ such that ~$0\in \mbox{\rm int}(F)$. The {\em Minkowski function} associated with $F$  is defined by
		\begin{equation}
			\label{gauge}
			\rho_F(x) = \inf\big\{t\geq 0\; |\; x\in tF\big\}, \; x\in \mathbb{R}^n.
		\end{equation}
		The {\em polar set} $F^\circ$ of $F$ is the set
		$$F^{\circ} = \big\{v\in \mathbb{R}^n\mid \langle v, x\rangle \leq 1,\; \forall x\in F\big\}.$$
	\end{Definition}
	
	We define
	$$\|F\| = \sup\{\|x\|\mid x\in F\}\; \text{ and }\;\|F^\circ \| = \sup\{\|v\|\mid  v\in F^\circ\}.$$
	
	Several fundamental properties of the Minkowski function are recalled in the next lemma. 
	
	\begin{lemma}\label{lemmarho0} {\rm (See \cite[Proposition 2.1]{colombo2004subgradient}, \cite[Lemma~4.1]{longoptimletter}, and \cite[Theorem~6.14 and Proposition~6.18]{mordukhovich2023easy})}
		Let $\rho_F$ be the Minkowski function defined in \eqref{gauge}. Then the following properties hold:
		\begin{enumerate}
			\item [{\rm (i)}] $\rho_F(\alpha x) =\alpha\rho_F(x)$ for all $\alpha \geq0$,  $x\in \mathbb{R}^n$.
			\item [{\rm (ii)}] $\rho_F(x_1+x_2)\leq \rho_F(x_1)+\rho_F(x_2)$ for all $x_1,x_2\in \mathbb{R}^n$.
			\item [{\rm (iii)}] $\rho_F(x)=0\;\text{ if and only if }\; x=0.$
			\item [{\rm (iv)}]  $\rho_F$ is $\|F^\circ\|$-Lipschitz continuous on $\mathbb{R}^n$.
			\item [{\rm (v)}] $x\in \operatorname{bd}(F)$ if and only if $\rho_F(x)=1$.  
			\item [{\rm (vi)}] $\rho_{F^\circ}(v)=\max\limits_{u\in F}\langle v,u\rangle$ for all $v\in \mathbb{R}^n.$
			\item [{\rm (vii)}] $\dfrac{\rho_F(x)}{\|F^\circ\|}\leq \|x\|\leq \|F\|\,\rho_F(x)$ for all $x\in\mathbb{R}^n.$
			\item [{\rm (viii)}] $\rho_F (x)\leq \|F\|\|F^{\circ}\|\rho_F(-x)$ for all $x\in \mathbb{R}^n$.
		\end{enumerate}
	\end{lemma}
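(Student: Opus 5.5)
The plan is to derive all eight items from the definition \eqref{gauge} and two facts about $F$: since $0\in\operatorname{int}(F)$ there is $r>0$ with $\mathbb{B}[0;r]\subset F$, and $F$ is compact, so $\|F\|<\infty$. A simple consequence is that $F^\circ$ is bounded, because $\|v\|\le 1/r$ for every $v\in F^\circ$; hence $\|F^\circ\|<\infty$.

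\emph{Items (i)--(iii).} Every $x$ lies in $tF$ for $t=\|x\|/r$, so $\rho_F$ is finite, with $\rho_F(x)\le \|x\|/r$.
\begin{itemize}
\item[(i)] This follows from $\alpha x\in \alpha tF \iff x\in tF$ for $\alpha>0$; the case $\alpha=0$ is trivial.
\item[(ii)] If $x_i\in t_iF$, convexity gives $x_1+x_2\in (t_1+t_2)F$. Take infima over $t_1,t_2$.
\item[(iii)] If $\rho_F(x)=0$, then $x\in t_jF$ with $t_j\downarrow 0$, so $\|x\|\le t_j\|F\|\to 0$.
\end{itemize}

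\emph{Item (vii).} The right inequality is the estimate just used in (iii). For the left one, I will first prove the duality formula
\[
\rho_F(x)=\sup_{v\in F^\circ}\langle v,x\rangle .
\]
To get it, use closedness of $F$: the infimum in \eqref{gauge} is attained, so $F=\{x\mid \rho_F(x)\le 1\}$. Apply the bipolar theorem $F^{\circ\circ}=F$, valid since $F$ is closed, convex and contains $0$, and combine with homogeneity. The formula gives $\rho_F(x)\le \|F^\circ\|\|x\|$, which is the left inequality of (vii).

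\emph{Item (iv).} Subadditivity gives $\rho_F(x)-\rho_F(y)\le\rho_F(x-y)\le\|F^\circ\|\|x-y\|$. Swapping $x$ and $y$ yields the Lipschitz bound.

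\emph{Item (v).} Since the infimum is attained, $\rho_F(x)\le 1 \iff x\in F$. If $\rho_F(x)<1$, continuity from (iv) makes $\{\rho_F<1\}$ an open subset of $F$, so $x\in\operatorname{int}(F)$. If $\rho_F(x)=1$, then $(1+\varepsilon)x\notin F$ for every $\varepsilon>0$, so $x\in\operatorname{bd}(F)$. Conversely, a boundary point lies in $F$, and it cannot have $\rho_F<1$, so $\rho_F=1$.

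\emph{Item (vi).} Apply the duality formula to the compact convex set $F^\circ$, which has $0$ in its interior because $F$ is bounded. Using $F^{\circ\circ}=F$ gives $\rho_{F^\circ}(v)=\sup_{u\in F}\langle v,u\rangle$. The supremum is a maximum by compactness of $F$.

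\emph{Item (viii).} Combine the two inequalities in (vii): $\rho_F(x)\le\|F^\circ\|\|x\|=\|F^\circ\|\|-x\|\le \|F^\circ\|\|F\|\rho_F(-x)$.

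The main obstacle is the duality formula $\rho_F=\sigma_{F^\circ}$ (the support function of $F^\circ$), which rests on the bipolar theorem, i.e.\ on a separation argument. Everything else is elementary. Since the lemma is cited from \cite{colombo2004subgradient,mordukhovich2023easy}, I would invoke the bipolar theorem from there rather than reprove separation.
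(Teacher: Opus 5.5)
The paper gives no proof of this lemma; it only imports the eight items from the cited works of Colombo--Wolenski, Long et al., and Mordukhovich--Nam. Your proposal is therefore a self-contained alternative, and it is correct.

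The elementary parts are (i)--(iii) and the right half of (vii). Attainment of the infimum follows from closedness of $F$ once (iii) gives $\rho_F(x)>0$ for $x\neq 0$. The only non-elementary input is the bipolar theorem. It gives $\rho_F=\sigma_{F^\circ}$ (the support function of $F^\circ$), and hence the constant $\|F^\circ\|$ in (vii), (iv) and (viii). Citing buys the paper brevity; your route shows where separation is actually needed.

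It is in fact needed less often than you use it:
\begin{itemize}
\item Item (vi) follows directly from the definitions. For $t>0$ one has $v\in tF^\circ$ iff $\langle v,u\rangle\le t$ for all $u\in F$. Also $\max_{u\in F}\langle v,u\rangle\ge 0$ because $0\in F$. Together these give $\rho_{F^\circ}(v)=\max_{u\in F}\langle v,u\rangle$ with no appeal to $F^{\circ\circ}=F$.
\item Item (v) only needs continuity of $\rho_F$. That already follows from (ii) and your bound $\rho_F(x)\le\|x\|/r$.
\end{itemize}
So only the sharp constant $\|F^\circ\|$ genuinely rests on the bipolar theorem.

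One small point should be stated explicitly. The identity $F=\{x\mid\rho_F(x)\le 1\}$ uses, besides attainment, that $tF\subset F$ for $t\in[0,1]$, i.e.\ convexity of $F$ together with $0\in F$.
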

	
	Given any~$c \in \mathbb{R}^n$ and~$r \geq 0$, define the generalized closed ball by
	\begin{equation}\label{geneball}
		\mathbb{B}_F[c; r] = \big\{x \in \mathbb{R}^n \mid \rho_F(x - c) \leq r\big\}.
	\end{equation}

	\begin{lemma}\label{lm2} The generalized closed ball $\mathbb B_F[c;r]$ is compact.
	\end{lemma}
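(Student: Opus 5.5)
We argue directly from the properties collected in Lemma~\ref{lemmarho0}, using the Heine--Borel theorem in $\mathbb{R}^n$. Compactness of $\mathbb B_F[c;r]$ then reduces to showing that the set is closed and bounded.

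\emph{Closedness.} By Lemma~\ref{lemmarho0}(iv), $\rho_F$ is $\|F^\circ\|$-Lipschitz continuous on $\mathbb{R}^n$, hence continuous. Therefore the map $x\mapsto \rho_F(x-c)$ is continuous, being the composition of $\rho_F$ with a translation. Consequently
\[
\mathbb B_F[c;r]=\{x\in\mathbb{R}^n\mid \rho_F(x-c)\le r\}
\]
is the preimage of the closed set $(-\infty,r]$ under a continuous function, and so it is closed. Here we should make sure that $\|F^\circ\|<\infty$. Since $0\in\operatorname{int}(F)$, there is $\delta>0$ with $\mathbb{B}[0;\delta]\subset F$. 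Then every $v\in F^\circ$ satisfies $\langle v,\delta v/\|v\|\rangle\le 1$ whenever $v\neq 0$, that is, $\|v\|\le 1/\delta$.

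\emph{Boundedness.} By the right-hand inequality in Lemma~\ref{lemmarho0}(vii), every $x\in\mathbb B_F[c;r]$ satisfies
\[
\|x-c\|\le \|F\|\,\rho_F(x-c)\le \|F\|\,r.
\]
Hence $\mathbb B_F[c;r]\subset \mathbb{B}[c;\|F\|\,r]$. Since $F$ is compact, $\|F\|<\infty$, so this Euclidean ball is bounded.

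Combining the two parts, $\mathbb B_F[c;r]$ is a closed and bounded subset of $\mathbb{R}^n$, and it is therefore compact. There is no real obstacle in this argument. The only point needing care is the finiteness of $\|F\|$ and $\|F^\circ\|$, which comes from the compactness of $F$ and the assumption $0\in\operatorname{int}(F)$ respectively. The degenerate case $r=0$ gives $\{c\}$ by Lemma~\ref{lemmarho0}(iii) and is already covered by the argument above.
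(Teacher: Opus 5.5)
Your proof is correct and matches the paper's argument: closedness comes from the continuity of $\rho_F$ (Lemma~\ref{lemmarho0}(iv)), and boundedness from the inclusion $\mathbb B_F[c;r]\subset \mathbb B[c;r\|F\|]$ given by Lemma~\ref{lemmarho0}(vii). Your additional verification that $\|F^\circ\|\le 1/\delta<\infty$ is correct and spells out a point the paper leaves implicit.
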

	\begin{proof} For any $x\in \mathbb B_F[c;r]$, by Lemma~\ref{lemmarho0}(vii), we have
		\begin{equation*}
			\|x-c\|\leq \|F\|\rho_F(x-c)\leq r\|F\|.
		\end{equation*}
		 It follows that $\mathbb B_F[c; r]\subset \mathbb B[c; r\|F\|]$; hence $\mathbb B_F[c;r]$ is bounded. The closedness of $\mathbb B_F[c; r]$ follows from~\eqref{geneball} and the continuity of the Minkowski function $\rho_F$ (see Lemma \ref{lemmarho0}(iv)). Therefore, $\mathbb B_F[c; r]$ is compact.
	\end{proof}
	\begin{remark}\label{remarkcompact}
		 By Tychonoff's Theorem (see, e.g., \cite[Chapter~5, Theorem~13]{kelly}),  if $Q\subset \mathbb{R}^n$ is compact, then the set  $Q^k = Q \times \dots \times Q$ (formed by $k$ copies of $Q$) is also compact in the product space $\mathbb{R}^{nk}$.
	\end{remark}
	
	We now recall several concepts of set-valued analysis, which can be  found in the books by Rockafellar and Wets~\cite{r} and Mordukhovich~\cite{mordukhovich2018variational}.
	
	The \textit{domain} and \textit{graph} of 
	a set-valued mapping $G: \mathbb{R}^{m} \rightrightarrows \mathbb{R}^{k}$ are defined, respectively, by 
$\operatorname{dom}(G) = \big\{a \in 	\mathbb{R}^{m} \mid G(a) \neq \emptyset\big\}$ and $$\operatorname{gph} (G) = \big\{(a, x) \in 	\mathbb{R}^{m} \times \mathbb{R}^{k} \mid x \in G(a)\big\}.$$
	
	\begin{Definition}\label{defusclsc}
		Let  $G: \mathbb{R}^{m} \rightrightarrows \mathbb{R}^{k}$ be a set-valued mapping and $\bar{a} \in \operatorname{dom} (G)$. We say that
		\begin{enumerate}[(i)]
			\item $G$ is \textit{upper semicontinuous} at $\bar{a}$ if for any open set $V \subset \mathbb{R}^{k}$ containing $G(\bar{a})$, there exists a neighborhood $U$ of $\bar{a}$ such that $G(a) \subset V$ for all $a \in U$;
			\item $G$ is \textit{lower semicontinuous} at $\bar{a}$ if for any open set $V \subset \mathbb{R}^{k}$ such that $G(\bar{a}) \cap V \neq \emptyset$, there exists a neighborhood $U$ of $\bar{a}$ such that $G(a) \cap V \neq \emptyset$ for all $a \in U$;
			\item $G$ is \textit{continuous} at $\bar{a}$ if it is both upper and lower semicontinuous at this point;
			\item $G$ is \textit{inner semicontinuous} at $(\bar{a},\bar{x}) \in \operatorname{gph}(G)$ if for every open set $V\subset \mathbb{R}^{k}$ with $\bar{x}\in V$, there is a neighborhood $U$ of  $\bar a$ such that $V\cap G(a)\neq \emptyset$ for all $a\in U$;
			\item $G$ is \textit{closed} at $\bar{a}$ if for any sequence $(a_\ell, x_\ell) \in \operatorname{gph} (G)$ with $(a_\ell, x_\ell) \to (\bar{a}, \Bar{x})$, one has $\Bar{x} \in G(\bar{a})$.
		\end{enumerate}
	\end{Definition}

 Given a function $g: \mathbb{R}^{k} \times \mathbb{R}^{m} \to \mathbb{R}$, consider the parametric optimization problem 
	\begin{align}\label{(P)}
		\min_{x \in \mathbb{R}^{k}} g(x, a),
	\end{align} with $a\in \mathbb{R}^m$ being a parameter. The \textit{optimal value function} associated with~\eqref{(P)} is defined by 
	$v(a) = \inf\limits_{x \in \mathbb{R}^{k}} g(x, a)$ for $a \in \mathbb{R}^{m}.$ The \textit{global solution mapping} $S: \mathbb{R}^{m} \rightrightarrows \mathbb{R}^{k}$ of~\eqref{(P)} is defined by 
			$S(a) = \big\{ \Bar{x} \in \mathbb{R}^{k} \mid g(\Bar{x}, a) = v(a) \big\}.$
			
  The next lemma will be essential for our later analysis.
	
	\begin{lemma}\label{upper}  Consider problem~\eqref{(P)} and assume that the function $g$ and the optimal value function $v$ are continuous. For a point $\bar{a} \in \mathbb{R}^{m}$, if there are a neighborhood $U$ of $\bar{a}$ and a compact set $W \subset \mathbb{R}^{nk}$ such that $S(a) \subset W$ for all $a \in U$, then the global solution mapping $S$ is upper semicontinuous at $\bar{a}$.
	\end{lemma}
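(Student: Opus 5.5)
The plan is to argue by contradiction, using sequential compactness of $W$ and the continuity of $g$ and $v$. Note first that $S$ is closed at $\bar a$. Indeed, if $(a_\ell,x_\ell)\in\operatorname{gph}(S)$ and $(a_\ell,x_\ell)\to(\bar a,\bar x)$, then $g(x_\ell,a_\ell)=v(a_\ell)$. Passing to the limit and using the continuity of $g$ (jointly in $(x,a)$) and of $v$ gives $g(\bar x,\bar a)=v(\bar a)$, that is, $\bar x\in S(\bar a)$.

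Now suppose $S$ is not upper semicontinuous at $\bar a$. Then there is an open set $V\supset S(\bar a)$ such that every neighborhood of $\bar a$ contains some $a$ with $S(a)\not\subset V$. Take the neighborhoods $U\cap\mathbb B(\bar a;1/\ell)$. This yields a sequence $a_\ell\to\bar a$ with $a_\ell\in U$, together with points $x_\ell\in S(a_\ell)\setminus V$.

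Since $a_\ell\in U$, we have $x_\ell\in S(a_\ell)\subset W$. Because $W$ is compact, we can pass to a subsequence and assume $x_\ell\to\bar x\in W$. By the closedness established above, $\bar x\in S(\bar a)\subset V$. On the other hand, each $x_\ell$ lies in the closed set $\mathbb R^{k}\setminus V$, so the limit $\bar x$ also lies in $\mathbb R^k\setminus V$. This is a contradiction.

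There is no serious obstacle in this argument. The one point needing care is that continuity of $g$ must be joint in $(x,a)$, which is how the hypothesis is read. (The ambient space $\mathbb R^{nk}$ in the statement should be understood as the decision space $\mathbb R^k$ of \eqref{(P)}.) Nonemptiness of $S(\bar a)$ is not needed: if $S(\bar a)$ is empty, the same argument shows that $S(a)$ is empty for all $a$ near $\bar a$, so the inclusion $S(a)\subset V$ holds trivially.
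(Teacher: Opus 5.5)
Your proof is correct and takes the same route as the paper's. Both argue by contradiction with $a_\ell\in U\cap\mathbb{B}(\bar a;1/\ell)$ and $x_\ell\in S(a_\ell)\setminus V\subset W$, extract a convergent subsequence, and pass to the limit in $g(x_\ell,a_\ell)=v(a_\ell)$; the only difference is that you state this limit step separately as closedness of $S$ at $\bar a$, whereas the paper does it inline.
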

	
	\begin{proof} If $S$ is not upper semicontinuous at $\bar{a}$, then there is an open set $V \subset \mathbb{R}^{k}$ with $S(\bar{a}) \subset V$ such that for every neighborhood $U_\ell = \mathbb{B}(\bar{a}; 1/\ell) \cap U$, $\ell \in \mathbb{N}$, of~$\bar{a}$, there exists at least one point $a_\ell \in U_\ell$ and an element $x_\ell \in S(a_\ell)$ such that $x_\ell \notin V$.
		  Clearly, the sequence $(a_\ell)$ converges to $\bar{a}$ as $\ell \to \infty$. Furthermore, since $a_\ell \in U$ for all $\ell$, our assumptions guarantee that $$x_\ell \in S(a_\ell) \subset W.$$ 
		 By the compactness of $W$, there is a subsequence $(x_{\ell'})$ of the sequence $(x_{\ell})$ that converges to some point $\Bar{x} \in W$.
		We now show that $\Bar{x} \in S(\bar{a})$. Indeed,  since $x_{\ell'} \in S(a_{\ell'})$, one has 
		\[ g(x_{\ell'}, a_{\ell'}) = v(a_{\ell'}). \]
		 Passing this equality to the limit as $\ell' \to \infty$, and utilizing the continuity of the functions $g$ and $v$, we obtain
		$g(\Bar{x}, \bar{a}) = v(\bar{a}).$
		 Hence, $\Bar{x} \in S(\bar{a})$.
		However, by our  construction, $x_{\ell'} \notin V$ for all $\ell'$. Since  $\mathbb{R}^{k}\setminus V$ is a closed set, we have $ \Bar{x} \notin V.$ 
		This contradicts the fact that $$\Bar{x} \in S(\bar{a}) \subset V.$$ 
		 Thus, $S$ must be upper semicontinuous at $\bar a$.
	\end{proof}
	
	%%%%%%%%%%%%%%%%%%%%%%%%%
	%%%%%%%%%%%%%%%%%%%%%%%%%%%%\
	
	\subsection{Generalized multi-source Weber problems}
	\label{subsec3}
	
	Let $A = \{a^1, \dots, a^m\} \subset \mathbb{R}^{n}$ be a finite collection of data points (also called demand points) and let $X = \{x^1, \dots, x^k\} \subset \mathbb{R}^{n}$ represent $k$ facilities (or centers), which are to be found. Put $I = \{1, \dots, m\}$ and $J = \{1, \dots, k\}.$ 
	
	In what follows, it is convenient for us to identify the above sets $A$ and $X$ with the vectors $a = (a^1, \dots, a^m) \in \mathbb{R}^{nm}$ and $x = (x^1, \dots, x^k) \in \mathbb{R}^{nk}$, respectively. Assume that $1 \leq k \leq m.$
	
	Let $F\subset\mathbb R^n$ be a compact convex set with~$0\in \mbox{\rm int}(F)$ and let~$\rho_F$be the Minkowski function defined by~\eqref{gauge}. 
	
	The \textit{generalized multi-source Weber problem} (GMWP) is the minimization problem
		\begin{equation}\label{maxminn} 
		\min_{x \in \mathbb{R}^{nk}} \left\{ f(x, a) = \sum_{i\in I} \min_{j \in J} \rho_F(x^j - a^i) \right\}, 
	\end{equation} where the data tube $a = (a^1, \dots, a^m) \in \mathbb{R}^{nm}$ plays the role of a parameter.
	
	\begin{remark} 
	In the special case where $F$ is the Euclidean unit ball, problem~\eqref{maxminn} reduces to the multi-source Weber problem recently investigated in~\cite{cuong2023global,Cuong2024,CTWYOptim}.
	\end{remark}
	
	\begin{remark} Arguing similarly as in ~\cite[Section~3]{CTWYOptim}, we can transform~\eqref{maxminn} to a DC programming problem, where DC stands for ``difference of convex functions''. Consequently,~\eqref{maxminn} can be efficiently solved by the DC algorithm (DCA) introduced in \cite{TA1, Tao1986} or the boosted DC algorithm in~\cite{AragonArtacho2022}. 
	\end{remark}

	\begin{Definition}\label{deflocalsolution}
		A vector $\bar{x} \in \mathbb{R}^{nk}$ is said to be a \textit{local optimal solution} to  problem~\eqref{maxminn} if there exists $\varepsilon > 0$ such that 
		\begin{equation}\label{localsolution}
			 f(\bar{x}, a) \leq f(x, a)          
		\end{equation}
		for all $x \in \mathbb{B}(\bar{x};\varepsilon)$.
		 If the inequality~\eqref{localsolution} is \textit{strict} for all $x \in \mathbb{B}(\bar{x}; \varepsilon) \setminus \{\bar{x}\}$, then $\bar{x}$ is said to be a \textit{strict local optimal solution} to problem~\eqref{maxminn}.
	\end{Definition}

	Given $a \in \mathbb{R}^{nm}$, by $S(a)$ and $S_{loc}(a)$ we denote the sets of global and local optimal solutions to~\eqref{maxminn}, respectively. So, we have

	\begin{equation}\label{global_sols_W}
		S(a) = \left\{ \bar{x} \in \mathbb{R}^{nk} \mid f(\bar{x}, a) = \min_{x \in \mathbb{R}^{nk}} f(x, a) \right\}.
	\end{equation}
	
	%%%%%%%%%%%%%%%%%%%%%%%%%%%%%%%%%%%%%%%%%%%
	%%%%%%%%%%%%%%%%%%%%%%%%%%%%%%%%%%%%%%%%%%
	\section{Lipschitz stability for generalized multi-source Weber problems}
	\label{sec3}
	
In this section, we derive explicit Lipschitz constants for some functions associated with~\eqref{maxminn}. We first establish the Lipschitz continuity of the objective function.
	
	\begin{proposition}\label{proobjectivefuntion}
			The function $f$ defined in~\eqref{maxminn} is Lipschitz continuous on $\mathbb{R}^{nk}\times \mathbb{R}^{nm}$. More precisely, $L = m\sqrt{2}\|F^\circ\|$ is a 
	 Lipschitz constant of $f$.
	\end{proposition}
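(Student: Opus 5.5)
The plan is to reduce the statement to the Lipschitz property of $\rho_F$ in Lemma~\ref{lemmarho0}(iv), together with two elementary facts. First, a pointwise minimum of finitely many $L$-Lipschitz functions is $L$-Lipschitz. Second, a sum of such functions has Lipschitz constant equal to the sum of the constants.

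Fix $(x,a),(y,b)\in\mathbb{R}^{nk}\times\mathbb{R}^{nm}$.

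\textbf{Step 1: one pair $(i,j)$.} By Lemma~\ref{lemmarho0}(iv) we have
\[
\bigl|\rho_F(x^j-a^i)-\rho_F(y^j-b^i)\bigr|\le \|F^\circ\|\,\|(x^j-y^j)-(a^i-b^i)\|\le \|F^\circ\|\bigl(\|x^j-y^j\|+\|a^i-b^i\|\bigr).
\]
By the Cauchy--Schwarz inequality in $\mathbb{R}^2$, the right-hand side is at most
\[
\sqrt2\,\|F^\circ\|\bigl(\|x^j-y^j\|^2+\|a^i-b^i\|^2\bigr)^{1/2}\le \sqrt2\,\|F^\circ\|\,\|(x,a)-(y,b)\|,
\]
since $\|x^j-y^j\|^2+\|a^i-b^i\|^2\le \|x-y\|^2+\|a-b\|^2$ for the Euclidean norm on the product space.

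\textbf{Step 2: the minimum over $j$.} Fix $i$ and choose $j_0$ attaining $\min_{j\in J}\rho_F(y^j-b^i)$. Then
\[
\min_{j\in J}\rho_F(x^j-a^i)-\min_{j\in J}\rho_F(y^j-b^i)\le \rho_F(x^{j_0}-a^i)-\rho_F(y^{j_0}-b^i)\le \sqrt2\,\|F^\circ\|\,\|(x,a)-(y,b)\|.
\]
Exchanging the roles of $(x,a)$ and $(y,b)$ gives the same bound for the absolute value.

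\textbf{Step 3: the sum over $i$.} Summing the bound of Step 2 over the $m$ indices $i\in I$ and applying the triangle inequality yields
\[
|f(x,a)-f(y,b)|\le m\sqrt2\,\|F^\circ\|\,\|(x,a)-(y,b)\|,
\]
which is the claimed constant $L$.

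The only step that needs care is Step 1, where the two block differences are combined. Using $\|u\|+\|v\|\le\sqrt2(\|u\|^2+\|v\|^2)^{1/2}$ is exactly what produces the factor $\sqrt2$. It must be applied to the single blocks $x^j-y^j$ and $a^i-b^i$, which are then bounded by the full vectors, and not to the whole vectors first. With that order, each of the $m$ terms contributes at most $\sqrt2\|F^\circ\|\,\|(x,a)-(y,b)\|$, which gives the stated constant.
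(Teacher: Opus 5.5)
Your proof is correct and follows the paper's argument essentially step for step: a $\sqrt2\,\|F^\circ\|$ bound for each term $\rho_F(x^j-a^i)$, then the pointwise-minimum rule and the finite-sum rule. The only difference in Step 1, applying Lemma~\ref{lemmarho0}(iv) directly to the difference instead of first splitting it by subadditivity, is immaterial. Your closing caution about the order of estimates is not needed, since $\|x^j-y^j\|+\|a^i-b^i\|\le\|x-y\|+\|a-b\|\le\sqrt2\,\|(x,a)-(y,b)\|$ gives the same constant.
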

	
	\begin{proof}
		For each pair $(i, j) \in I \times J$, consider the function $\varphi_{i,j} \colon \mathbb{R}^{nk} \times \mathbb{R}^{nm} \to \mathbb{R}$ defined by
		$\varphi_{i,j}(x, a) = \rho_F(x^j - a^i)$ for all $(x, a) \in \mathbb{R}^{nk} \times \mathbb{R}^{nm}$. The Lipschitz continuity of  $f$ is established through the following three claims, where the third one can be easily verified.
		
		\begin{claim}
			Each function $\varphi_{i,j}$ is Lipschitz continuous on $\mathbb{R}^{nk} \times \mathbb{R}^{nm}$ with the constant $L_0 = \sqrt{2}\|F^\circ\|$.
		\end{claim}
		
		Indeed, fix any $(\bar{x}, \bar{a})$ and $(x, a)$ in $\mathbb{R}^{nk} \times \mathbb{R}^{nm}$. Using the subadditivity together with the Lipschitz continuity of $\rho_F$ (see Lemma \ref{lemmarho0}(ii) and (iv)) and the Cauchy-Schwarz inequality, one has
		\begin{equation*}
			\begin{aligned}
				\varphi_{i,j}(\bar{x}, \bar{a}) - \varphi_{i,j}(x, a) 
				&= \rho_F(\bar{x}^j - \bar{a}^i) - \rho_F(x^j - a^i) \\
				&\le \rho_F\left((\bar{x}^j - x^j) + (a^i - \bar{a}^i)\right) \\
				&\le \rho_F(\bar{x}^j - x^j) + \rho_F(a^i - \bar{a}^i) \\
				&\le \|F^\circ\| \left( \|\bar{x}^j - x^j\| + \|a^i - \bar{a}^i\| \right) \\
				&\le \sqrt{2}\|F^\circ\| \sqrt{\|\bar{x}^j - x^j\|^2 + \|a^i - \bar{a}^i\|^2} \\
				&\le \sqrt{2}\|F^\circ\| \|(\bar{x}, \bar{a}) - (x, a)\|.
			\end{aligned}
		\end{equation*}
		By interchanging the roles of $(\bar{x}, \bar{a})$ and $(x, a)$, we obtain the same upper bound for $\varphi_{i,j}(x, a) - \varphi_{i,j}(\bar{x}, \bar{a})$.  Thus $\varphi_{i,j}$ is Lipschitz with constant $L_0 = \sqrt{2}\,\|F^\circ\|$.
		
		\begin{claim} 
			Let $\phi_1, \ldots, \phi_s \colon \mathbb{R}^r \to \mathbb{R}$ be Lipschitz continuous functions with positive constants $\ell_1, \ldots, \ell_s$, respectively. Define
			\[
			\phi_{\min}(z) = \min_{1\le p\le s} \phi(z)\quad\text{for } z\in \mathbb{R}^r.
			\]
			Then $\phi_{\min}$ is Lipschitz continuous on $\mathbb{R}^r$ with constant
			$\ell = \max\limits_{1\le p\le s}\ell_p$.
		\end{claim}
		Indeed, take any $z,\bar{z}\in \mathbb{R}^r$. Let $\hat{p} \in \{1, \dots, s\}$ be such that 
			 $\phi_{\hat{p}}( z) = \phi_{\min}( z)$.	It follows that
		\begin{equation*}
			\phi_{\min}(\bar{z}) - \phi_{\min}(z) \le \phi_{\hat{p}}(\bar{z}) - \phi_{\hat{p}}(z) \le \ell_{\hat{p}}\|\bar{z}-z\| \le \ell\|\bar{z}-z\|.
		\end{equation*} Since the inequality $\phi_{\min}(z) - \phi_{\min}(\bar{z}) \le \ell\|z-\bar{z}\|$ can be proved similarly, 
		the conclusion is valid.

		\begin{claim}
			A finite sum of Lipschitz continuous functions is Lipschitz continuous with a constant equal to the sum of their individual Lipschitz constants.
		\end{claim}
		
	 Now we can conclude the proof of the proposition. For each $i\in I$, define
	\begin{equation}\label{psi_i}
		\psi_i(x,a) = \min_{j\in J} \varphi_{i,j}(x,a).
	\end{equation}
	 From~\eqref{maxminn} and~\eqref{psi_i}, we obtain
	$f(x,a) = \sum\limits_{i\in I}\psi_i(x,a).$ 	By Claim~1, each function $\varphi_{i,j}$ has the Lipschitz constant
			$L_0=\sqrt{2}\,\|F^\circ\|$. Hence, Claim~2 and~\eqref{psi_i} imply that $\psi_i$ is Lipschitz continuous
			with the same constant $L_0$. 
  Then, by Claim~3 we see that $f$ is Lipschitz continuous with constant $L = m\sqrt{2}\,\|F^\circ\|$.
	\end{proof}

	\begin{Definition} \label{defoptimalvaluefunctions}
  Depending on the variable $a = (a^1, \dots, a^m) \in \mathbb{R}^{nm}$, the \textit{optimal value function $\mathcal{V}$ of~\eqref{maxminn}} is defined by
	\begin{equation} \label{valuefunctionwp}
				\mathcal{V}(a) = \min_{x \in \mathbb{R}^{nk}} f(x, a) = \min_{x \in \mathbb{R}^{nk}}\left\{ \sum_{i\in I}\min_{j \in J} \rho_F(x^j - a^i)\right\}.
			\end{equation}
	\end{Definition}
	
	The following theorem is the main result of this section.
	 
	\begin{theorem} \label{thm:vkW_lipschitz}
		The optimal value function $\mathcal{V}$ of~\eqref{maxminn} is $\sqrt{m}\|F^\circ\|$-Lipschitz continuous on $\mathbb{R}^{nm}$.
	\end{theorem}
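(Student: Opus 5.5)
The plan is to compare $f(\cdot,a)$ and $f(\cdot,b)$ for fixed $x$, with a constant that does not depend on $x$, and then pass to the minimum over $x$. The constant $\sqrt{m}$ instead of $m\sqrt{2}$ comes from two savings. Since $x$ is held fixed, only the parameter changes, so the factor $\sqrt{2}$ from Claim~1 in the proof of Proposition~\ref{proobjectivefuntion} disappears. The sum of the per-point perturbations is then estimated with Cauchy--Schwarz rather than by adding $m$ equal constants.

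\emph{Step 1: a pointwise estimate in $a$.} Fix $x\in\mathbb{R}^{nk}$ and $a,b\in\mathbb{R}^{nm}$. For each $i\in I$ and $j\in J$, Lemma~\ref{lemmarho0}(ii) and (iv) give
\[
\rho_F(x^j-a^i)\le \rho_F(x^j-b^i)+\rho_F(b^i-a^i)\le \rho_F(x^j-b^i)+\|F^\circ\|\,\|a^i-b^i\|.
\]
Taking the minimum over $j\in J$ on both sides, as in Claim~2, yields
\[
\min_{j\in J}\rho_F(x^j-a^i)\le \min_{j\in J}\rho_F(x^j-b^i)+\|F^\circ\|\,\|a^i-b^i\|.
\]
Summing over $i\in I$ and applying the Cauchy--Schwarz inequality gives
\[
\sum_{i\in I}\|a^i-b^i\|\le \sqrt{m}\Big(\sum_{i\in I}\|a^i-b^i\|^2\Big)^{1/2}=\sqrt{m}\,\|a-b\|.
\]
Hence $f(x,a)\le f(x,b)+\sqrt{m}\|F^\circ\|\,\|a-b\|$ for every $x$.

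\emph{Step 2: passing to the optimal value.} Taking the infimum over $x$ on both sides gives $\mathcal{V}(a)\le\mathcal{V}(b)+\sqrt{m}\|F^\circ\|\,\|a-b\|$. Exchanging the roles of $a$ and $b$ gives the reverse inequality, which proves the claimed Lipschitz bound.

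\emph{The only subtle point.} Definition~\ref{defoptimalvaluefunctions} writes ``$\min$'', so one might think attainment must be checked. The argument above avoids this: it only uses infima and requires no minimizer. If a minimizer is wanted for exposition, one can pick $\bar x\in S(b)$ and note $\mathcal{V}(a)\le f(\bar x,a)$. For this, existence of global solutions is needed. It can be obtained by restricting to $x\in (\operatorname{co}A')^k$ for a suitable compact set using Lemma~\ref{lm2} and Remark~\ref{remarkcompact}. The inf-based argument makes this detour unnecessary. Finiteness of $\mathcal{V}$ is clear, since $f\ge 0$.
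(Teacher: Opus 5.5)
Your proof is correct and follows the paper's argument: the same bound $\rho_F(x^j-a^i)\le\rho_F(x^j-b^i)+\|F^\circ\|\,\|a^i-b^i\|$, then the minimum over $j$, summation over $i$, and Cauchy--Schwarz to get $\sqrt{m}\,\|F^\circ\|\,\|a-b\|$. The only difference is cosmetic: the paper passes to $\mathcal{V}$ through an $\varepsilon$-minimizer of $f(\cdot,a)$ and lets $\varepsilon\to 0$, whereas you take the infimum of your $x$-uniform bound directly; as you say, no attainment is needed, so the convex-hull aside can be dropped (and it would not be safe to rely on for a general gauge $\rho_F$ anyway).
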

	
	\begin{proof}
		Let $a = (a^1, \dots, a^m)$ and $\bar{a} = (\bar{a}^1, \dots, \bar{a}^m)$ be two arbitrary vectors in~$\mathbb{R}^{nm}$. For any $\varepsilon > 0$, it follows from \eqref{valuefunctionwp} that there exists  $x = (x^1, \dots, x^k) \in \mathbb{R}^{nk}$ such that
		\begin{equation*} \label{eq:eps_opt}
			f(x, a) \leq \mathcal{V}(a) + \varepsilon.
		\end{equation*}
		Furthermore, applying~\eqref{valuefunctionwp} for $a=\bar a$ yields  $\mathcal{V}(\bar{a}) \leq f(x, \bar{a})$. Consequently, 
		\begin{equation} \label{eq:value_diff}
			\mathcal{V}(\bar{a}) - \mathcal{V}(a) \leq f(x, \bar{a}) - f(x, a) + \varepsilon.
		\end{equation}
			For any $i \in I$ and $j \in J$, invoking the subadditivity and the Lipschitz continuity property of $\rho_F$ from Lemma \ref{lemmarho0}(ii) and (iv), we obtain
		\begin{align*}
			\rho_F(x^j - \bar{a}^i) &\leq \rho_F(x^j - a^i) + \rho_F(a^i - \bar{a}^i)\\ 
			& = \rho_F(x^j - a^i) + \big(\rho_F(a^i - \bar{a}^i)-\rho_F(0)\big)\\ 
			&\leq \rho_F(x^j - a^i) + \|F^\circ\| \|a^i - \bar{a}^i\|.
		\end{align*}
		Taking the minimum over $j \in J$ gives
			\begin{equation*}\label{estimate_i}
				\min_{j \in J} \rho_F(x^j - \bar{a}^i) \leq \min_{j \in J} \rho_F(x^j - a^i) + \|F^\circ\| \|a^i - \bar{a}^i\| 
			\end{equation*} for all $i \in I$.
		 Summing up these inequalities over $i \in I$, we get
		\begin{equation} \label{eq:gk_sum_diff}
			f(x, \bar{a}) \leq f(x, a) + \|F^\circ\| \sum_{i \in I} \|a^i - \bar{a}^i\|.
		\end{equation}
		Moreover, it follows from the Cauchy-Schwarz inequality that
		\begin{equation*} \label{eq:cauchy_bound}
			\sum_{i \in I} \|a^i - \bar{a}^i\| \leq \sqrt{m} \left( \sum_{i \in I} \|a^i - \bar{a}^i\|^2 \right)^{1/2} = \sqrt{m} \|a - \bar{a}\|.
		\end{equation*}
		Hence, by~\eqref{eq:value_diff} and~\eqref{eq:gk_sum_diff} we obtain
		\begin{equation*}
			\mathcal{V}(\bar{a}) - \mathcal{V}(a) \leq \sqrt{m} \|F^\circ\| \|a - \bar{a}\| + \varepsilon.
		\end{equation*}
		Since $\varepsilon > 0$ is arbitrary, it follows that $$\mathcal{V}(\bar{a}) - \mathcal{V}(a) \leq \sqrt{m} \|F^\circ\| \|a - \bar{a}\|.$$ By swapping the roles of $a$ and $\bar{a}$, we also have
		 $$\mathcal{V}(a) - \mathcal{V}(\bar{a}) \leq \sqrt{m} \|F^\circ\| \|a - \bar{a}\|.$$
		So, the inequality $	
			|\mathcal{V}(\bar{a}) - \mathcal{V}(a)| \leq \sqrt{m} \|F^\circ\| \|a - \bar{a}\|$ is valid for all $a, \bar a\in \mathbb{R}^{nm}$. This completes the proof. \end{proof}
				
	\section{Stability analysis of global solution mappings} \label{sec4}
	
To analyze the stability of the global solution set of~\eqref{maxminn} w.r.t. the changes of the vector $a = (a^1, \dots, a^m) \in \mathbb{R}^{nm}$, representing the data points of~\eqref{maxminn}, we need to put the procedure called natural clustering from~\cite{cuong2023global} (see~\cite{cuong2020qualitative} for the original version applied to the minimum sum-of-squares clustering problem) in our more general setting. 

Given a vector $x=(x^1,\dots,x^k)\in \mathbb{R}^{nk}$, we define a partition of the data set $A$ into $k$ subsets $\{A^1, \dots, A^k\}$ through the following inductive procedure. Starting with $A^0 = \emptyset$, each subset $A^j$ for $j\in J$ is defined by
	\begin{equation}\label{Aj}
		A^j = \left\{ a^i \in A \setminus \bigcup_{q=0}^{j-1} A^q \;\; \bigg| \;\; \rho_F(x^j - a^i) = \min_{s\in J} \rho_F(x^s - a^i) \right\}.
	\end{equation}
	The resulting collection $\{A^1, \dots, A^k\}$ is referred to as the \textit{natural clustering} associated with the centers system $x = (x^1, \dots, x^k) \in \mathbb{R}^{nk}$. Based on this procedure, we have the following definition.
	
	\begin{Definition}\label{attraction_1}
		A component $x^j$ of $x = (x^1, \dots, x^k) \in \mathbb{R}^{nk}$ is said to be \emph{attractive} with respect to $A$ if the set
		\begin{equation}\label{attraction}
			A[x^j] = \left\{ a^i \in A \mid \rho_F(x^j - a^i) = \min_{s\in J} \rho_F(x^s - a^i) \right\},
		\end{equation} called the \textit{attraction set} of $x^j$,
		is nonempty.
	\end{Definition}
	
	\noindent It follows directly from~\eqref{Aj} and~\eqref{attraction} that 
	$$A^j = A[x^j] \setminus \bigcup_{q=1}^{j-1} A^q\quad\; (j\in J).$$
	
	For every $x=(x^1,\dots,x^k)\in \mathbb{R}^{nk}$, where $k \ge 2$, define the vectors $$x^{-j} = \big(x^1,\dots,x^{j-1},x^{j+1},\dots,x^k\big)\in \mathbb{R}^{n(k-1)}\quad\; (j\in J).$$

		Although the existence of global solutions for GMWP was established in \cite{LNTV}, no bounds for the centers $x^j$, $j\in J$, were given there. We can identify specific closed balls whose intersection contains each component of a global solution as follows.

	\begin{theorem}\label{Compactball}
		Let $a=(a^1,\ldots,a^m) \in \mathbb{R}^{nm}$ be such that $a^i \neq a^p$ for all $i, p \in I$ with $i \neq p$. Then, for any  $x=(x^1,\ldots,x^k)$ from the global solution set $S(a)$ of~\eqref{maxminn}, one has
		\begin{equation}\label{coverball}
			x^j \in \bigcap_{i\in I}\mathbb{B}_F[a^{i}; r^{i}] \quad \text{for all } j \in J,
		\end{equation}
		where 	\begin{equation}\label{indexradius}
			r^{i} = (\|F\| \|F^\circ\| + 1) \max_{p \in I} \rho_F(a^{i} - a^p)
		\end{equation}
		and the closed ball $\mathbb{B}_F[a^{i}; r^{i}]$, with $i\in I$, is defined as in \eqref{geneball}. In addition,~$S(a)$ is compact.
	\end{theorem}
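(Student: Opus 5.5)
The plan is to show first that every center of a global solution is attractive. Then I bound an attractive center by the triangle inequality for $\rho_F$, together with the asymmetry estimate in Lemma~\ref{lemmarho0}(viii).

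\emph{Step 1 (attractiveness).} Let $x\in S(a)$ and suppose some $x^{j_0}$ has $A[x^{j_0}]=\emptyset$. Since the $a^i$ are pairwise distinct and $k\le m$, I would show that some data point $a^{i_0}$ has positive cost:
\[
\min_{s\in J}\rho_F(x^s-a^{i_0})>0.
\]
If every data point had zero cost, then each $a^i$ would coincide with some center by Lemma~\ref{lemmarho0}(iii). Because $x^{j_0}$ attracts nothing, all $m$ distinct points would lie among the remaining $k-1<m$ centers, which is impossible. Now replace $x^{j_0}$ by $a^{i_0}$. No term of the sum increases, and the $i_0$-th term drops to $0$. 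This strictly decreases $f$ and contradicts global optimality. Hence every $x^j$ is attractive.

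\emph{Step 2 (the bound).} Fix $j\in J$ and pick $a^p\in A[x^j]$. For any $i\in I$, subadditivity (Lemma~\ref{lemmarho0}(ii)) gives
\[
\rho_F(x^j-a^i)\le \rho_F(x^j-a^p)+\rho_F(a^p-a^i).
\]
To bound $\rho_F(x^j-a^p)$, compare with the competitor in which $x^j$ is moved to $a^p$. Alternatively, and more directly, use that $x^j$ is the nearest center of $a^p$ and that the costs are small at an optimum. Indeed, optimality together with the feasible choice of centers placed at $k$ of the data points gives
\[
\rho_F(x^j-a^p)\le f(x,a)\le \sum_{q\in I}\min_{s}\rho_F(a^{i_s}-a^q).
\]
That bound is too crude, so I would instead use the single-point comparison. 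Since $a^p$ is attracted to $x^j$, optimality implies
\[
\rho_F(x^j-a^p)\le \rho_F(y-a^p)
\]
for a suitable feasible modification $y$. In the cleanest form, I expect to argue that $\rho_F(x^j-a^p)\le \rho_F(a^i-a^p)$ for the appropriate $i$. Suppose instead it exceeded $\max_q\rho_F(a^q-a^p)$ for every point of $A[x^j]$. Then moving $x^j$ to a point of its attraction set would not increase the cost of any point in its cluster, and it would strictly decrease the cost at $a^p$. That contradicts optimality. Then
\[
\rho_F(x^j-a^p)\le \max_q \rho_F(a^p-a^q).
\]
Using Lemma~\ref{lemmarho0}(viii), I convert $\rho_F(a^p-a^i)\le \|F\|\|F^\circ\|\rho_F(a^i-a^p)\le \|F\|\|F^\circ\|\max_q\rho_F(a^i-a^q)$. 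Similarly, $\max_q\rho_F(a^p-a^q)$ must be related to $\max_q\rho_F(a^i-a^q)$; this matching of indices so that the final constant is exactly $\|F\|\|F^\circ\|+1$ times $\max_{q}\rho_F(a^i-a^q)$ is the delicate step. I would try bounding $\rho_F(x^j-a^p)$ directly by $\rho_F(a^i-a^p)$, obtaining
\[
\rho_F(x^j-a^i)\le \rho_F(a^i-a^p)+\|F\|\|F^\circ\|\rho_F(a^i-a^p)\le r^i .
\]
The required intermediate claim is therefore $\rho_F(x^j-a^p)\le\rho_F(a^i-a^p)$ for all $i$, or at least for the maximizing $i$. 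This claim is the main obstacle, and I expect it to need a careful exchange argument within the cluster.

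\emph{Step 3 (compactness).} By Step 2, $S(a)\subset Q^k$ with $Q=\bigcap_i\mathbb B_F[a^i;r^i]$. This set is compact by Lemma~\ref{lm2} and Remark~\ref{remarkcompact}. Moreover, $S(a)$ is closed, being the level set $\{x: f(x,a)=\mathcal V(a)\}$ of the continuous function $f(\cdot,a)$ (Proposition~\ref{proobjectivefuntion}). Hence $S(a)$ is compact.
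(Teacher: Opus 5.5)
Your Steps~1 and~3 are sound. Step~1 is exactly the paper's Case~2: a non-attractive center is replaced by an uncovered data point. In Step~3, proving closedness of $S(a)$ directly as a level set of the continuous function $f(\cdot,a)$ is a fine substitute for quoting the existence theorem.

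The genuine gap is Step~2, which is the heart of the statement and which you leave open.
\begin{itemize}
\item The intermediate claim you settle on, $\rho_F(x^j-a^p)\le\rho_F(a^i-a^p)$ for all $i$, is false. For $i=p$ it forces $x^j=a^p$. For example, $n=k=1$, $\rho_F=|\cdot|$, $a=(0,1,2)$ gives $S(a)=\{1\}$, and the claim fails with $a^p=0$.
\item The exchange you sketch moves $x^j$ to a point of its own attraction set. It only yields \emph{some} $a^p\in A[x^j]$ with $\rho_F(x^j-a^p)\le\max_q\rho_F(a^q-a^p)$, not $\max_q\rho_F(a^p-a^q)$ as you wrote.
\item That bound is indexed by $p$, not by the center $a^i$ of the ball $x^j$ must lie in. This is why your ``matching of indices'' cannot be done.
\end{itemize}

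The missing idea is to use the center $a^i$ itself as the competitor. Suppose $\rho_F(x^j-a^i)>r^i$. Then for every $q\in I$, by Lemma~\ref{lemmarho0}(ii) and (viii),
\[
\rho_F(x^j-a^q)\ge\rho_F(x^j-a^i)-\rho_F(a^q-a^i)>r^i-\|F\|\|F^\circ\|\rho_F(a^i-a^q)\ge\max_{p\in I}\rho_F(a^i-a^p)\ge\rho_F(a^i-a^q).
\]
So $a^i$ beats $x^j$ at every data point simultaneously. Replacing $x^j$ by $a^i$ increases no term of $f$. It strictly decreases the term of any point attracted by $x^j$, and such a point exists by your Step~1. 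This is the paper's Case~1.

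Within your own framework, the same fix works. Relocating $x^j$ to $a^i$, rather than into its cluster, shows that some $a^p\in A[x^j]$, depending on $i$, satisfies $\rho_F(x^j-a^p)\le\max_q\rho_F(a^i-a^q)$. Your triangle inequality through this $a^p$, together with (viii), then gives exactly $r^i$.
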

	
	\begin{proof} Observe first that the set $S(a)$ is nonempty and closed by Theorem~3.2 in \cite{LNTV}. To prove~\eqref{coverball} by contradiction, suppose that there exists an element
		\[
		y=(y^1,\dots,y^k)\in S(a)
		\]
		such that one of its components does not belong to $\bigcap\limits_{i\in I}\mathbb B_F[a^i;r^i]$. By reordering the system of facilities $(y^1,\dots,y^k)$, if necessary, we may assume that the component is~$y^k$. Then, we can find some~$i_0\in I$ such that
		\begin{equation}\label{notin}
			y^k\notin \mathbb{B}_F[a^{i_0}; r^{i_0}].
		\end{equation}
	For every $i\in I$, we have
		{\small\begin{equation*} 
				\begin{aligned}
					\rho_F( y^k - a^i) &\ge \rho_F(y^k - a^{i_0}) - \rho_F(a^i - a^{i_0})\quad(\text{by Lemma \ref{lemmarho0}(ii)}) \\
					&> r^{i_0} - \rho_F(a^i - a^{i_0}) \quad(\text{by \eqref{notin} and \eqref{geneball}}) \\
					&= (\|F\|\|F^\circ\| + 1) \max_{p \in I} \rho_F(a^{i_0} - a^p) - \rho_F(a^i - a^{i_0}) \quad(\text{by \eqref{indexradius}}) \\
					&\ge (\|F\|\|F^\circ\| + 1) \max_{p \in I} \rho_F(a^{i_0} - a^p) - \|F\|\|F^\circ\| \rho_F(a^{i_0} - a^i)\\ & \qquad(\text{by Lemma \ref{lemmarho0}(viii)})  \\
					& \ge\, \max_{p \in I} \rho_F(a^{i_0} - a^p),
				\end{aligned}
		\end{equation*}}
which yields
		\begin{equation}\label{mauthuan1_final}
			\rho_F(y^k - a^i) > \rho_F(a^{i_0} - a^i)\quad \text{for all } i\in I.
		\end{equation}
		 Consider the attraction set of $y^k$ 	(see Definition~\ref{attraction_1})
		\[
		A[y^k] = \left\{ i \in I \mid \rho_F(y^k-a^i)=\min_{j\in J}\rho_F(y^j-a^i)\right\}
		\]
	and distinguish the following two cases.
		
		\textbf{Case 1: $A[y^k] \neq \emptyset$.} Then there exists an index $i^* \in I$ such that 
		\begin{equation}\label{eq22_a}
		\rho_F(y^k - a^{i^*}) = \min_{j \in J} \rho_F(y^j - a^{i^*}).	
		\end{equation}
		Set
		$$\bar{x} =(\bar x^1,\dots,\bar x^{k-1},\bar x^k)= (y^1, \dots, y^{k-1}, a^{i_0}).$$
		This together with \eqref{mauthuan1_final} and \eqref{eq22_a} implies that
		\begin{equation*}\label{25'}
			\begin{array}{ll}
				\min\limits_{j \in J} \rho_F(y^j - a^{i^*}) = \rho_F(y^k - a^{i^*}) & > \rho_F(a^{i_0} - a^{i^*})\\
				&= \rho_F(\bar{x}^k - a^{i^*})\\
				& \ge \min\limits_{j \in J} \rho_F(\bar{x}^j - a^{i^*}).    
			\end{array}
		\end{equation*}
		Hence, using~\eqref{mauthuan1_final} one again, we can evaluate the value of the objective function of~\eqref{maxminn} at the pair $(y,a)$ as follows
		\begin{align*}
			f(y, a) &= \sum_{i \in I \setminus \{i^*\}} \min_{j \in J} \rho_F(y^j - a^i) + \min_{j \in J} \rho_F(y^j - a^{i^*}) \\
			& > \sum_{i \in I \setminus \{i^*\}} \min_{j \in J} \rho_F(y^j - a^i) + \min_{j \in J} \rho_F(\bar{x}^j - a^{i^*}) \\
			&\ge \sum_{i \in I \setminus \{i^*\}} \min \left\{ \min_{j \in J\setminus\{k\}} \rho_F(y^j - a^i),\, \rho_F(a^{i_0} - a^i) \right\} + \min_{j \in J} \rho_F(\bar{x}^j - a^{i^*}) \\
			&= \sum_{i \in I \setminus \{i^*\}} \min \left\{ \min_{j \in J\setminus\{k\}} \rho_F(\bar x^j - a^i),\, \rho_F({\bar x}^k - a^i) \right\} + \min_{j \in J} \rho_F(\bar{x}^j - a^{i^*}) \\
			&= \sum_{i\in I}\min_{j \in J} \rho_F(\bar{x}^j - a^i)\\
			& = f(\bar{x},a).
		\end{align*}
		Thus, we have $f(y, a) > f(\bar{x}, a)$. This inequality contradicts the fact that $y$ is a global optimal solution to problem \eqref{maxminn}.
		
		\textbf{Case 2: $A[y^k] = \emptyset$.} Then we have $$\rho_F(y^k - a^i) > \min_{j \in J} \rho_F(y^j - a^i)\;\text{ for all }i \in I.$$ Hence, the objective value is unchanged if we remove the facility $y^k$, that is,
		\begin{equation}\label{eq21}
			f(y, a) = \sum_{i\in I} \min_{j \in J\setminus\{k\}} \rho_F(y^j - a^i). 
		\end{equation}
		As the points $a^1,\dots,a^m$ are pairwise distinct and $k-1<m$, while the set $\{y^1,\dots,y^{k-1}\}$ contains at most $k-1$ points, there exists $i^*\in I$ such that
		\[
		a^{i^*}\notin \{y^1,\dots,y^{k-1}\}.
		\]
		Therefore,
		\begin{equation}\label{eq22}
		\min_{j\in J\setminus\{k\}}\rho_F(y^j-a^{i^*})>0.
		\end{equation}
			Then by setting $\hat{x}=(\hat x^1,\dots, \hat{x}^{k-1},\hat x^k) = (y^1, \dots, y^{k-1}, a^{i^*})$ (i.e., replacing the non-attractive facility $y^k$ by the data point $a^{i^*}$) we obtain
		\begin{align*}
			f(y, a) &= \sum_{i \in I \setminus \{i^*\}} \min_{j \in J\setminus\{k\}} \rho_F(y^j - a^i) + \min_{j \in J\setminus\{k\}} \rho_F(y^j - a^{i^*})\quad(\text{by \eqref{eq21}}) \\
			&> \sum_{i \in I \setminus \{i^*\}} \min_{j \in J\setminus\{k\}} \rho_F(y^j - a^i) + \rho_F(a^{i^*} - a^{i^*}) \quad(\text{by \eqref{eq22}})\\
			&\ge \sum_{i \in I \setminus \{i^*\}} \min \left\{ \min_{j \in J\setminus\{k\}} \rho_F(y^j - a^i), \rho_F(a^{i^*} - a^i) \right\} + \rho_F(a^{i^*} - a^{i^*}) \\
			&= \sum_{i\in I} \min_{j \in J} \rho_F(\hat{x}^j - a^i)\\
			& = f(\hat{x}, a).
		\end{align*}
		This contradicts the fact that $y \in S(a)$.
		
		 We have thus proved that~\eqref{coverball} holds.

		It remains to prove that the solution set~$S(a)$ is compact. Since $\bigcap\limits_{i\in I}\mathbb B_F[a^i;r^i]$ is compact by Lemma \ref{lm2}, it follows from Remark~\ref{remarkcompact} that the set
		$		\left(\bigcap\limits_{i\in I}\mathbb B_F[a^i;r^i]\right)^k$
		is compact. By \eqref{coverball}, we have
		\[
		S(a)\subset \left(\bigcap_{i\in I}\mathbb B_F[a^i;r^i]\right)^k.
		\]
		Moreover, $S(a)$ is closed by Theorem 3.2 in \cite{LNTV}. Therefore, $S(a)$ is a closed subset of a compact set, and hence it is compact. 		
	\end{proof}

\begin{Definition}\label{defsolutionfunctions} The set-valued mapping $S: \mathbb{R}^{nm} \rightrightarrows \mathbb{R}^{nk}$, which assigns to each vector $a=(a^1,\dots,a^m)\in\mathbb{R}^{nm}$ the set
	\begin{equation}\label{solutionfunctionwp}
		S(a)=\left\{\bar{x}\in\mathbb{R}^{nk}\mid f(\bar{x},a)=\mathcal{V}(a)\right\},
	\end{equation} is called the \textit{global solution mapping} of the parametric generalized multi-source Weber problem problem~\eqref{maxminn}.
\end{Definition}	

The next example shows that the global solution mapping $S$ may not be lower semicontinuous at some point $\bar{a}=(\bar a^1,\dots,\bar a^m)\in \mathbb{R}^{nm}$, even when $\bar a^i \neq \bar a^p$ for all $i, p \in I$ with $i \neq p$.

\begin{example}\label{ex_LSC_failure}
	Consider problem \eqref{maxminn} with $n=1$, $m=3$, $k=2$, $F=\mathbb{B}[0;1]$, and $\bar a=(0,1,2)$. A direct computation shows that $\mathcal{V}(\bar a)=1$ and 
	\[
	S(\bar a)= \big( [0,1] \times \{2\} \big) \cup \big( \{0\} \times [1,2] \big).
	\]
	For each $\ell\in\mathbb{N}$, let $a_\ell = (0, 1, 2 + \frac{1}{\ell})$. It is clear that $a_\ell \to \bar a$ as $\ell \to \infty$. Since the distance between the first two points is $1$ and the distance between the last two points is $1 + \frac{1}{\ell}$, we have $\mathcal{V}(a_\ell) = 1$ and 
	\[
	S(a_\ell) = [0,1] \times \left\{ 2 + \frac{1}{\ell} \right\}
	\]
	for all $\ell \in \mathbb{N}$. Now, consider the open set 
	\[
	V = \left( -\frac{1}{2}, \frac{1}{2} \right) \times \left( 1, \frac{3}{2} \right) \subset \mathbb{R}^2.
	\]
	Since $(0, \frac{5}{4}) \in S(\bar a) \cap V$, we see that $S(\bar a) \cap V \neq \emptyset$. However, for any $(x_1, x_2) \in S(a_\ell)$, it holds that $x_2 = 2 + \frac{1}{\ell} > 2$. Thus, $S(a_\ell) \cap V = \emptyset$ for all $\ell \in \mathbb{N}$, which means that $S^k$ is not lower semicontinuous at $\bar a$.
\end{example}

\begin{remark} \label{reinner}
		Example~4.4 also shows that the global solution mapping \(S\) is not
		inner semicontinuous at the graph point $\big(\bar a,(0,5/4)\big)\in \operatorname{gph}S.$
		Indeed, the open set
		$$
		V=\left(-\frac12,\frac12\right)\times\left(1,\frac32\right)
		$$
		is a neighborhood of \((0,5/4)\), while
		\[
		S(a_\ell)\cap V=\emptyset \quad \text{for all } \ell\in\mathbb N,
		\]
		where \(a_\ell=(0,1,2+1/\ell)\to \bar a\). Hence \(S\) fails to be inner
		semicontinuous at \(\big(\bar a,(0,5/4)\big)\). Since lower semicontinuity
		of a set-valued mapping at a parameter is equivalent to inner semicontinuity
		at every graph point composed from the parameter and an element of the corresponding image set, this provides another proof
		that~\(S\) is not lower semicontinuous at~\(\bar a\).
\end{remark}

	Despite to the lack of lower semicontinuity,   the global solution mapping of the GMWP is upper semicontinuous under a mild assumption.
	
	\begin{theorem}\label{uscw}
		The global solution mapping $S$ of problem \eqref{maxminn} is upper semicontinuous at every point $\bar{a}=(\bar{a}^1,\ldots,\bar{a}^m)\in \mathbb{R}^{nm}$ satisfying the condition $\bar{a}^i \neq \bar{a}^p$ for all $i, p \in I$ with $i \neq p$.
	\end{theorem}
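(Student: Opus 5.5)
We will apply Lemma~\ref{upper} with $g=f$ and $v=\mathcal{V}$. Continuity of $f$ on $\mathbb{R}^{nk}\times\mathbb{R}^{nm}$ comes from Proposition~\ref{proobjectivefuntion}, and continuity of $\mathcal{V}$ on $\mathbb{R}^{nm}$ comes from Theorem~\ref{thm:vkW_lipschitz}. It therefore suffices to find a neighborhood $U$ of $\bar a$ and a compact set $W\subset\mathbb{R}^{nk}$ such that $S(a)\subset W$ for all $a\in U$.

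First we choose $U$ so that the distinctness hypothesis persists. Put $\delta=\min_{i\neq p}\|\bar a^i-\bar a^p\|>0$ and $U=\mathbb{B}(\bar a;\delta/2)$. For $a\in U$ we have $\|a^i-\bar a^i\|\le\|a-\bar a\|<\delta/2$ for each $i$, so $a^i\neq a^p$ whenever $i\neq p$. Theorem~\ref{Compactball} then applies to every $a\in U$ and gives
\[
S(a)\subset\Big(\bigcap_{i\in I}\mathbb{B}_F[a^i;r^i(a)]\Big)^k,\qquad r^i(a)=(\|F\|\|F^\circ\|+1)\max_{p\in I}\rho_F(a^i-a^p).
\]

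The main step is to bound these balls uniformly in $a\in U$. By Lemma~\ref{lemmarho0}(iv), for $a\in U$,
\[
\rho_F(a^i-a^p)\le\rho_F(\bar a^i-\bar a^p)+\|F^\circ\|\,\|(a^i-a^p)-(\bar a^i-\bar a^p)\|\le\rho_F(\bar a^i-\bar a^p)+\|F^\circ\|\delta .
\]
Hence $r^i(a)\le R:=(\|F\|\|F^\circ\|+1)\big(\max_{i,p}\rho_F(\bar a^i-\bar a^p)+\|F^\circ\|\delta\big)$. Now take any $x\in S(a)$ and any $j\in J$. By Lemma~\ref{lemmarho0}(vii) applied with $i=1$,
\[
\|x^j-\bar a^1\|\le\|x^j-a^1\|+\|a^1-\bar a^1\|\le\|F\|\,r^1(a)+\delta/2\le\|F\|R+\delta/2.
\]
So we may take $W=\big(\mathbb{B}[\bar a^1;\|F\|R+\delta/2]\big)^k$, which is compact by Remark~\ref{remarkcompact}. (Equivalently, $W=\big(\mathbb{B}_F[\bar a^1;R']\big)^k$ for a suitable $R'$, using Lemma~\ref{lm2}.)

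With $f$ and $\mathcal{V}$ continuous and $S(a)\subset W$ for all $a\in U$, Lemma~\ref{upper} gives upper semicontinuity of $S$ at $\bar a$. The only delicate points are keeping the distinctness condition on a whole neighborhood, so that Theorem~\ref{Compactball} is available throughout $U$, and the uniform bound on $r^i(a)$; both follow directly from the Lipschitz continuity of $\rho_F$.
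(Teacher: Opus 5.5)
Your proposal is correct and follows the paper's proof step for step: restrict to a neighborhood of $\bar a$ on which the data points stay pairwise distinct, apply Theorem~\ref{Compactball} there, bound the radii $r^i(a)$ uniformly to obtain a fixed compact $W$, and conclude with Lemma~\ref{upper}. The differences are cosmetic. You bound $r^i(a)$ via the Lipschitz continuity of $\rho_F$ at $\bar a$ and use a single ball about $\bar a^1$, whereas the paper bounds $\rho_F(a^i-a^p)$ by $\|F^\circ\|d$ with $d$ a diameter and intersects balls about all $\bar a^i$. One small fix: for $m=1$ your $\delta$ is a minimum over an empty set, so replace it by $\min\{\delta,1\}$.
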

	
	\begin{proof}
		Let $\bar{a}=(\bar{a}^1,\ldots,\bar{a}^m)\in \mathbb{R}^{nm}$ be such that $\bar{a}^i \neq \bar{a}^p$ for all $i \neq p$ in $I$. Then one can choose $r>0$ sufficiently small so that, for every
		$$a=(a^1,\ldots,a^m)\in \prod\limits_{p\in I}\mathbb B[\bar a^p;r],$$
		the perturbed demand points remain pairwise distinct,
 i.e., $a^i \neq a^p$ whenever $i \neq p$.
		Define the diameter of the union of these neighborhoods as
		\[
		d = \text{diam}\left( \bigcup_{p \in I} \mathbb{B}[\bar{a}^p; r] \right) > 0.
		\]
		Fix any $a=(a^1,\ldots,a^m) \in \prod\limits_{i \in I} \mathbb{B}[\bar{a}^i; r]$.
		Since $\{a^1, \dots, a^m\}\subset \bigcup\limits_{i \in I} \mathbb{B}[\bar{a}^i; r]$, we have 
		\begin{equation*}\label{diam}
					\|a^i -  a^p\| \le d\;\text{ for all }i, p \in I.
		\end{equation*}
		Combining this with Lemma \ref{lemmarho0}(vii), we obtain 
		\begin{equation}\label{key311}
			\rho_F(a^i -  a^p) \le \|F^\circ\| \|a^i -  a^p\| \le \|F^\circ\| d \quad \text{for all }i, p \in I.
		\end{equation}
	Furthermore, it follows from Theorem~\ref{Compactball} (see \eqref{coverball}) and a fact obtained in the proof of Lemma~\ref{lm2}) that, for any $x = (x^1, \dots, x^k) \in S(a)$, 
		\begin{equation}\label{key313}
			x^j \in \bigcap_{i\in I}\mathbb{B}_F[a^{i}; r^{i}] \subset \bigcap_{i\in I}\mathbb{B}\left[ a^{i}; \|F\|r^{i} \right] \quad \text{for all }j \in J
		\end{equation} with $r^i$ being defined by~\eqref{indexradius} for $i\in I$. Then for any $i\in I$ and $j\in J$, we have
		\[
		\begin{aligned}
			\|x^j - \bar{a}^{i}\| &\le \|x^j - a^{i}\| + \|a^{i} - \bar{a}^{i}\| \\
			&\le \|F\|r^i+r\quad(\text{by \eqref{key313} and } a^i\in \mathbb{B}[\bar a^i;r])\\
			&= \|F\|(\|F\|\|F^\circ\| + 1) \max_{p \in I} \rho_F(a^{i} - a^p) + r \quad (\text{by \eqref{indexradius}})\\
			&\le \|F\|\|F^\circ\|(\|F\|\|F^\circ\| + 1) d + r \quad (\text{by \eqref{key311}}).
		\end{aligned}
		\]
		It follows that $$x^j \in \bigcap_{i\in I}\mathbb{B}[\bar{a}^{i}; \gamma]\;\text{ for all }j \in J,$$
		where $\gamma = \|F\|\|F^\circ\|(\|F\|\|F^\circ\| + 1) d + r$. 
		Thus, 
		\begin{equation}\label{key314}
			S(a) \subset \left(\bigcap_{i\in I}\mathbb{B}[\bar{a}^{i}; \gamma]\right)^k\; \text{for all }a \in \prod_{i \in I} \mathbb{B}[\bar{a}^i; r].
		\end{equation}
		
		Note that $\bigl(\bigcap\limits_{i\in I}\mathbb{B}[\bar{a}^{i}, \gamma]\bigr)^k$ is a compact subset of $\mathbb{R}^{nk}$, and both the objective function~$f$ and the optimal value function~$\mathcal{V}$ of~\eqref{maxminn} are continuous (see Proposition~\ref{proobjectivefuntion} and Theorem~\ref{thm:vkW_lipschitz}). Hence, applying Lemma~\ref{upper} to~\eqref{maxminn} and using~\eqref{key314}, we can infer that the global solution mapping~$S$ is upper semicontinuous at~$\bar{a}$.
	\end{proof}
	
	\begin{remark}
		When $\rho_F$ is the Euclidean norm, Theorem \ref{uscw} reduces to Theorem 4.1 in    \cite{Cuong2024}.
	\end{remark}
	
	\section{Stability analysis of local solution mappings}\label{sec5}
	
In contrast to the global solution mapping, which  can neither lower semicontinuous nor inner lower semicontinuous ( see Example~\ref{ex_LSC_failure} and Remark~\ref{reinner}), the local solution mapping may possess these property under certain regularity conditions. 
	
	\begin{Definition} \label{deflocalsolutionmapping}
			The \textit{local solution mapping} $S_{loc}: \mathbb{R}^{nm} \rightrightarrows \mathbb{R}^{nk}$ of (GMWP) is defined by
		\begin{equation} \label{localsolutionfunction}
			S_{loc}(a) = \left\{ \bar{x} \in \mathbb{R}^{nk} \mid \bar{x} \text{ is a local optimal solution of \eqref{maxminn}}\right\}.
		\end{equation}
	\end{Definition}

 We now prove that inner semicontinuity of $S_{loc}(\cdot)$ is available at the graph points generated by the strict local optimal solutions of~(GMWP).
	
	\begin{theorem} \label{thm:LSC_local}
		Let $\bar{x} \in S_{loc}(\bar{a})$  be a strict local optimal solution at $\bar{a}$ (see Definition \ref{deflocalsolution}).  Then the local solution mapping $S_{loc}$ is inner semicontinuous at $(\bar{a},\bar{x})$.
	\end{theorem}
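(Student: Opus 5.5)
The plan is the standard localization argument for strict local minimizers of a jointly continuous function. By Definition~\ref{deflocalsolution}, there is $\varepsilon>0$ such that $f(\bar x,\bar a)<f(x,\bar a)$ for all $x\in\mathbb{B}(\bar x;\varepsilon)\setminus\{\bar x\}$. Let an open set $V\ni\bar x$ be given, and choose $0<\delta<\varepsilon$ with $\mathbb{B}[\bar x;\delta]\subset V$. We will find a neighborhood $U$ of $\bar a$ such that, for each $a\in U$, some minimizer of $f(\cdot,a)$ over the compact ball $K=\mathbb{B}[\bar x;\delta]$ lies in the open ball $\mathbb{B}(\bar x;\delta)$. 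Such a point is an unconstrained local optimal solution, hence lies in $S_{loc}(a)\cap V$.

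First, we create a gap on the sphere. The set $\Sigma=\{x\mid \|x-\bar x\|=\delta\}$ is compact, and $f(\cdot,\bar a)$ is continuous by Proposition~\ref{proobjectivefuntion}. Strictness gives $f(x,\bar a)>f(\bar x,\bar a)$ on $\Sigma$, so
\[
\eta:=\min_{x\in\Sigma}f(x,\bar a)-f(\bar x,\bar a)>0 .
\]
Next, we use the uniform Lipschitz estimate. By Proposition~\ref{proobjectivefuntion}, $|f(x,a)-f(x,\bar a)|\le L\|a-\bar a\|$ for all $x$, with $L=m\sqrt2\|F^\circ\|$. Take $U=\mathbb{B}(\bar a;\eta/(3L))$. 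For $a\in U$ and $x\in\Sigma$ this yields
\[
f(x,a)>f(x,\bar a)-\eta/3\ge f(\bar x,\bar a)+2\eta/3>f(\bar x,a)+\eta/3 .
\]

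Then we localize. For $a\in U$, the continuous function $f(\cdot,a)$ attains its minimum on $K$ at some $x_a$. Since $f(x_a,a)\le f(\bar x,a)<\min_\Sigma f(\cdot,a)$, the point $x_a$ is not on $\Sigma$, so $\|x_a-\bar x\|<\delta$. Hence there is a radius $\varepsilon'>0$ with $\mathbb{B}(x_a;\varepsilon')\subset \mathbb{B}(\bar x;\delta)\subset K$. On this ball $f(x_a,a)\le f(x,a)$, so $x_a\in S_{loc}(a)$, and $x_a\in V$, which establishes inner semicontinuity.

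The step needing the most care is the gap argument. It requires that strictness holds on the whole punctured ball containing the sphere, which is why $\delta<\varepsilon$ is chosen. It also requires a bound on the perturbation that is uniform in $x$; this comes directly from the global Lipschitz constant in Proposition~\ref{proobjectivefuntion}, so no compactness in $a$ is needed. No distinctness assumption on the $\bar a^i$ is required.
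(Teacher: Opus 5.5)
Your proof is correct, and it takes a genuinely different route from the paper at the key step. Both arguments minimize $f(\cdot,a)$ over a compact ball around $\bar x$ and then observe that an interior constrained minimizer is an unconstrained local solution lying in $V$. They differ in how they show the constrained minimizer is interior.

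The paper argues by contradiction. It takes $a_\ell\to\bar a$ with $S_{loc}(a_\ell)\cap V=\emptyset$ and lets $x_\ell$ minimize $f(\cdot,a_\ell)$ over $\mathbb{B}[\bar x;\varepsilon/2]$. It then extracts a convergent subsequence and passes to the limit in $f(x_\ell,a_\ell)\le f(x,a_\ell)$, which shows that the cluster point minimizes $f(\cdot,\bar a)$ on the closed ball. Strictness on that whole ball forces the cluster point to be $\bar x$, so $x_\ell$ is eventually in the open ball.

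You replace this sequential-compactness and limit-identification step with a direct argument. You use a quantitative gap $\eta$ on the sphere $\Sigma$ and the $a$-uniform estimate $|f(x,a)-f(x,\bar a)|\le L\|a-\bar a\|$. This buys you an explicit neighborhood $U=\mathbb{B}(\bar a;\eta/(3L))$ and avoids subsequences and contradiction. It also means you invoke strictness only on $\Sigma$, not on the full ball.

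The paper's route needs only joint continuity of $f$, not a Lipschitz constant. Your argument does not really need the Lipschitz constant either: uniform closeness of $f(\cdot,a)$ to $f(\cdot,\bar a)$ on the compact set $\Sigma\cup\{\bar x\}$ follows from joint continuity alone. Proposition~\ref{proobjectivefuntion} simply makes the neighborhood explicit.

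Your write-up also avoids a small slip in the paper's proof, where the strict local optimality inequality is written with $a$ in place of $\bar a$.
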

	\begin{proof}
		 Suppose to the contrary that $S_{loc}$ is not inner semicontinuous at $(\bar{a}, \bar{x})$. According to Definition \ref{defusclsc}(iv), this means that there exists an open neighborhood $V$ of $\bar{x}$ such that for every neighborhood $U$ of $\bar{a}$, there exists at least one $a \in U$ satisfying $S_{loc}(a) \cap V = \emptyset.$ Then, by considering the neighborhoods $U_\ell = \mathbb{B}(\bar{a}; 1/\ell)$ with $\ell \in \mathbb{N}$, we can construct a sequence $(a_\ell)$ such that $a_\ell \to \bar{a}$ and 
		\begin{equation} \label{pf:isc_contradiction}
			S_{loc}(a_\ell) \cap V = \emptyset \quad \text{for all }\, \ell \in \mathbb{N}.
		\end{equation}
		Since $\bar{x}$ is a strict local optimal solution at $\bar{a}$, there exists $\varepsilon > 0$ such that 
			\begin{equation}\label{s_localsolution}
				f(\bar{x}, a)<f(x, a)  \quad \text{for all }\,  x \in \mathbb{B}(\bar{x}; \varepsilon) \setminus \{\bar{x}\}.      
			\end{equation}
	Without loss of generality, we choose $\varepsilon$ small enough so that $\mathbb{B}(\bar{x}; \varepsilon/2) \subset V$. For each $\ell$, consider the restricted optimization problem
		\[ \min_{x \in \mathbb{B}[\bar{x}; \varepsilon/2]} f(x, a_\ell). \]
		Since $f(\cdot, a_\ell)$ is continuous (by Proposition \ref{proobjectivefuntion}) and the set $\mathbb{B}[\bar{x}; \varepsilon/2]$ is compact, it follows from the Weierstrass Theorem that there is $x_\ell\in\mathbb{B}[\bar{x}; \varepsilon/2]$ such that
		\begin{equation}\label{localsolution1}
			\min_{x \in \mathbb{B}[\bar{x}; \varepsilon/2]} f(x, a_\ell)=f(x_\ell,a_\ell).
		\end{equation}
		Again, by the compactness of $\mathbb{B}[\bar{x}; \varepsilon/2]$, there exists a subsequence of $(x_\ell)$ (still denoted by $(x_\ell)$) converging to some $\hat{x} \in \mathbb{B}[\bar{x}; \varepsilon/2]$. Since $f$ is continuous on $\mathbb{R}^{nk}\times \mathbb{R}^{nm}$ by Proposition~\ref{proobjectivefuntion}, we have $f(x_\ell, a_\ell) \to f(\hat{x}, \bar{a}).$
		Because $x_\ell$ is an optimal solution of $f(\cdot, a_\ell)$ on $\mathbb{B}[\bar{x}; \varepsilon/2]$, it holds that $$f(x_\ell, a_\ell) \leq f(x, a_\ell)\;\text{ for all }x \in \mathbb{B}[\bar{x}; \varepsilon/2].$$ 
		Taking the limit as $\ell \to \infty$ yields
		\begin{equation}\label{hat_x} f(\hat{x}, \bar{a}) \leq f(x, \bar{a}) \; \text{ for all }\, x \in \mathbb{B}[\bar{x}; \varepsilon/2].\end{equation}
		 If $\hat{x}\neq \bar{x}$, then by~\eqref{s_localsolution} we see that $f(\bar{x}, a)<f(\hat x, a)$. This contradicts~\eqref{hat_x}. Hence, we must have $\hat{x} = \bar{x}$. 
		It follows that $x_\ell \to \bar{x}$, and so we get  
		$$x_\ell \in \mathbb{B}(\bar{x}; \varepsilon/2)\subset V\;\text{ for all }\ell\text{ sufficiently large}.$$ 
		Combining this with \eqref{localsolution1}, we conclude that $x_\ell$ is a local optimal solution of $f(\cdot,a_\ell)$ on $\mathbb{B}(\bar{x}; \varepsilon/2)$ for all $\ell$ sufficiently large, i.e., $x_\ell\in S_{loc}(a_\ell).$ Therefore, 
		$$S_{loc}(a_\ell) \cap V \neq \emptyset,$$ 
		which directly contradicts our assumption in \eqref{pf:isc_contradiction}. The proof is complete.
	\end{proof}

	 Strict local optimality in Theorem~\ref{thm:LSC_local} provides a convenient sufficient condition for the inner semicontinuity of the local solution mapping. However, this condition is not necessary in general, as illustrated by the following example.
	
	\begin{example}\label{ex:ISC_non_strict}
		Consider (GMWP) in $\mathbb{R}$ with $k=1$ and $m=2$. Let the data points be $\bar a^1=-1$ and $\bar a^2=1$, and let the distance be the absolute value $\rho_F=|\cdot|$. The corresponding objective function is
		\[
		f(x,\bar a)=|x+1|+|x-1|.
		\]
		It is straightforward to verify that the set of global optimal solutions is the interval $S(\bar a)=[-1,1]$. Hence, every point in this segment is a (non-strict) local optimal solution.
		
		Fix $\bar x=1 \in S_{loc}(\bar a)$ and let $V$ be an arbitrary neighborhood of $\bar x$. Then, there exists $\varepsilon > 0$ such that $(1-\varepsilon, 1+\varepsilon) \subset V$. We define a neighborhood $U$ of $\bar{a} = (-1, 1)$ in $\mathbb{R}^2$ as
		\[
		U = (-1-\varepsilon/2, -1+\varepsilon/2) \times (1-\varepsilon/2, 1+\varepsilon/2).
		\]
		For any perturbed data $a = (a^1, a^2) \in U$, the objective function becomes 
		$$f(x, a) = |x - a^1| + |x - a^2|.$$ 
		A direct computation shows that
		\[
		S(a)=[a^1,a^2],
		\]
		and in particular,
		\[
		a^2\in S_{loc}(a)\cap V.
		\]
		Therefore, $S_{loc}(a)\cap V\neq\emptyset$ for all $a\in U$,
		which proves that the local solution mapping $S_{loc}$ is inner semicontinuous at $(\bar a, \bar x)$, despite the fact that $\bar x$ is not a strict local optimal solution.
	\end{example}

	Next, we recall some classical notions regarding strict convexity; for further details, see \cite{Bauschke,boyd,mordukhovich2023easy}.
	
	\begin{Definition}\label{strictlyconvexfuntion}
		Let $\Omega \subseteq \mathbb{R}^n$ be a convex set. A function $\phi: \Omega \to \mathbb{R}$ is said to be \textit{strictly convex} if for any two distinct points $x, y \in \Omega$ and any $\lambda \in (0, 1)$, the following strict inequality holds:$$\phi(\lambda x + (1 - \lambda) y) < \lambda \phi(x) + (1 - \lambda) \phi(y).$$
	\end{Definition}
	
	\begin{Definition}
		A set $\Omega\subset \mathbb{R}^n$ is said to be \textit{strictly convex} if for any two distinct points $x, y \in\Omega$  and any $\lambda \in (0, 1)$, one has 
		$$\lambda x + (1 - \lambda)y \in \operatorname{int} (\Omega).$$
	\end{Definition}
	
	The following lemma is fundamental for establishing another result on the inner semicontinuity of the local solution mapping for the GMWP.
	
	\begin{lemma}\label{lemmaNam}
		Let $\Omega$ and $\{a^1,\dots,a^q\}$ be subsets of $\mathbb{R}^n$ such that $\Omega$ is convex, and let $\phi:\Omega\to \mathbb{R}$ be defined by
		\begin{equation}\label{phi_z}\phi(z)=\sum_{i=1}^q\rho_F(z-a^i)\;\text{ for } z\in \Omega,
			\end{equation}
		where $\rho_F$ is the Minkowski function defined in\eqref{Minkowski_f}.  
		Assume that $F$ is strictly convex and the points $\{a^1,\dots,a^q\}$ are not collinear.
		Then the function $\phi$ is strictly convex.  
	\end{lemma}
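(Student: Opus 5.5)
Each summand $z\mapsto\rho_F(z-a^i)$ is convex, since $\rho_F$ is sublinear by Lemma~\ref{lemmarho0}(i)--(ii). Hence $\phi$ is convex on $\Omega$, and the plan is to argue by contradiction that equality in the convexity inequality cannot occur. Take distinct $x,y\in\Omega$ and $\lambda\in(0,1)$, and suppose
\[
\phi(\lambda x+(1-\lambda)y)=\lambda\phi(x)+(1-\lambda)\phi(y).
\]
Since $\lambda x+(1-\lambda)y-a^i=\lambda(x-a^i)+(1-\lambda)(y-a^i)$, each term satisfies
\[
\rho_F\big(\lambda(x-a^i)+(1-\lambda)(y-a^i)\big)\le\lambda\rho_F(x-a^i)+(1-\lambda)\rho_F(y-a^i).
\]
Summing gives equality for $\phi$ only if every one of these $q$ inequalities is an equality. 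Setting $u_i=x-a^i$ and $v_i=y-a^i$, we get $\rho_F(\lambda u_i+(1-\lambda)v_i)=\rho_F(\lambda u_i)+\rho_F((1-\lambda)v_i)$ for every $i$, using positive homogeneity.

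The key step is to show that strict convexity of $F$ forces equality in subadditivity only for positively collinear vectors: if $\rho_F(p+q)=\rho_F(p)+\rho_F(q)$ with $p,q\neq 0$, then $q=tp$ for some $t>0$.

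\begin{itemize}
\item First I check that $\rho_F(p+q)=\rho_F(p)+\rho_F(q)$ trivially whenever $p=0$ or $q=0$.
\item Otherwise put $\alpha=\rho_F(p)>0$ and $\beta=\rho_F(q)>0$. The points $p/\alpha$ and $q/\beta$ lie in $\operatorname{bd}(F)$ by Lemma~\ref{lemmarho0}(v).
\item Then $(p+q)/(\alpha+\beta)$ is a convex combination $\tfrac{\alpha}{\alpha+\beta}\,\tfrac{p}{\alpha}+\tfrac{\beta}{\alpha+\beta}\,\tfrac{q}{\beta}$ with strictly positive weights, and by the equality its $\rho_F$-value is $1$.
\item If $p/\alpha\neq q/\beta$, strict convexity of $F$ would place this combination in $\operatorname{int}(F)$, where $\rho_F<1$ (via (v) and homogeneity). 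This is a contradiction, so $p/\alpha=q/\beta$, i.e., $q=(\beta/\alpha)p$.
\end{itemize}

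Applying this to the pair $p=\lambda u_i$, $q=(1-\lambda)v_i$, for each $i$ one of three things holds: $x=a^i$, or $y=a^i$, or $x-a^i$ and $y-a^i$ are positive multiples of each other. In the first two cases $a^i$ lies on the line through $x$ and $y$. In the third case, if $y-a^i=t(x-a^i)$, then $t\neq1$ because $x\neq y$, so $a^i=(y-tx)/(1-t)$ also lies on the line through $x$ and $y$. Therefore every $a^i$ lies on the single line through the distinct points $x$ and $y$, which contradicts the assumption that $\{a^1,\dots,a^q\}$ is not collinear. This gives the strict inequality, so $\phi$ is strictly convex.

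The main obstacle is the equality-case characterization for $\rho_F$. There I must use carefully that strict convexity of $F$ pushes open segments between distinct boundary points into the interior, and that interior points have gauge strictly less than $1$. The latter follows since a small dilation of an interior point stays in $F$, so its gauge is at most $1/(1+\delta)<1$.
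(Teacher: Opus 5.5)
Your proof is correct and follows essentially the same route as the paper. Both arguments reduce equality for $\phi$ to termwise equality and use strict convexity of $F$ to force $x-a^i$ and $y-a^i$ to be positive multiples whenever $a^i\notin\{x,y\}$, which puts every $a^i$ on the line through $x$ and $y$. The only difference is that you prove the equality case of subadditivity for $\rho_F$ directly, via the boundary points $p/\alpha$, $q/\beta$ and the fact that interior points have gauge below $1$, whereas the paper cites \cite[Proposition 8.13]{mordukhovich2023easy} for it.
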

	\begin{proof}
		Since the Minkowski function $\rho_F$ is convex (see, e.g., \cite[Proposition 6.10]{mordukhovich2023easy}) and the sum of convex functions remains convex, the function $\phi(\cdot)$ in~\eqref{phi_z} is convex on $\Omega$. To establish the strict convexity of~$\phi$, we proceed by contradiction. Suppose there exist two distinct points $\bar{x}, \bar{y} \in \Omega$ and some $\lambda \in (0, 1)$ such that\begin{equation}\label{eq:equality_phi}\phi(\lambda \bar{x} + (1 - \lambda) \bar{y}) = \lambda \phi(\bar{x}) + (1 - \lambda) \phi(\bar{y}).\end{equation}Due to the convexity of each individual term $\rho_F(z - a^i)$, the equality \eqref{eq:equality_phi} holds if and only if\begin{equation}\label{eq:equality_rho}\rho_F(\lambda(\bar{x} - a^i) + (1 - \lambda)(\bar{y} - a^i)) = \lambda \rho_F(\bar{x} - a^i) + (1 - \lambda) \rho_F(\bar{y} - a^i)\end{equation}for all $i \in \{1, \dots, q\}$. Using the homogeneous property of $\rho_F(\cdot)$ (see Lemma~\eqref{lemmarho0}), we can rewrite the expression on the right-hand side of~\eqref{eq:equality_rho} as $$\rho_F\left(\lambda(\bar{x} - a^i)\right) + \rho_F\left((1 - \lambda) (\bar{y} - a^i)\right).$$ Therefore, if $\bar{x} \neq a^i$ and $\bar{y} \neq a^i$, then by~\eqref{eq:equality_rho}, the strict convexity of $F$, and~\cite[Proposition 8.13]{mordukhovich2023easy} we can find $\alpha_i > 0$ such that $$\alpha_i \lambda (\bar{x} - a^i) = (1 - \lambda) (\bar{y} - a^i).$$ Rewrite the last equality as $$\bar{x} - a^i = \gamma^i (\bar{y} - a^i), \quad \text{where } \gamma^i = \frac{1 - \lambda}{\alpha_i \lambda}.$$
		Since $\bar{x} \neq \bar{y}$, it follows that $\gamma^i \neq 1$. Solving for $a^i$ yields 
		$$a^i = \frac{1}{1 - \gamma^i} \bar{x} +\left(1- \frac{1}{1 - \gamma^i}\right) \bar{y}.$$
		This shows that $a^i$ lies on the line $\mathcal{L}(\bar{x}, \bar{y}) = \{t\bar{x} + (1-t)\bar{y} \mid t \in \mathbb{R} \}$. In the cases where $\bar{x} = a^i$ or $\bar{y} = a^i$, it is trivial that $a^i \in \mathcal{L}(\bar{x}, \bar{y})$. Consequently, all  the points $\{a^1, \dots, a^q\}$ must lie on the same line $\mathcal{L}(\bar{x}, \bar{y})$, which contradicts the assumption. Thus, $\phi$ must be strictly convex.
	\end{proof}

 The next sufficient condition for the inner semicontinuity of the local solution mapping for the GMWP is the second main result of this section.
	
	\begin{theorem}\label{main_result_3}
		Consider the GMWP defined in~\eqref{maxminn}, where the set $F$ associated with the Minkowski function $\rho_F$ is strictly convex. Let $(\bar a, \bar x) \in \operatorname{gph} (S_{loc})$, where $\bar{a}=(\bar{a}^1, \dots, \bar{a}^m) \in \mathbb{R}^{nm}$ and $\bar{x}=(\bar{x}^1, \dots, \bar{x}^k) \in \mathbb{R}^{nk}$. Let 
		$$\bar A=\{{\bar a}^1,\ldots,{\bar a}^m \}\quad \text{and}\quad\widehat J=\{j\in J\mid \bar A[\bar x^{j}]\neq\emptyset\},$$ 
		where $\bar A[\bar x^{j}]$ (defined in \eqref{attraction}) is the attraction set of $\bar x^{j}$ with respect to $\bar A$.
	  If the conditions
		\begin{enumerate}
			\item[{\rm (i)}] $\bar A[\bar{x}^{j_1}] \cap \bar A[\bar{x}^{j_2}]=\emptyset$ whenever $j_1, j_2 \in J$ and $j_1 \neq j_2$;
			\item[{\rm (ii)}] For each $j \in \widehat{J}$, the data points  from $\bar{A}[\bar{x}^j]$ are not collinear
		\end{enumerate} are satisfied, then the local solution mapping $S_{loc}$ of \eqref{maxminn} is inner semicontinuous at $(\bar a, \bar x)$.
	\end{theorem}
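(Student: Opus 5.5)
The plan is to reduce Theorem~\ref{main_result_3} to Theorem~\ref{thm:LSC_local} by showing that, under (i) and (ii), $\bar x$ is a \emph{strict} local optimal solution of~\eqref{maxminn} at $\bar a$. Once this is known, Theorem~\ref{thm:LSC_local} gives the inner semicontinuity of $S_{loc}$ at $(\bar a,\bar x)$ directly. One technical point must be settled first: components $\bar x^j$ with $j\notin\widehat J$ can be moved slightly without changing $f$. Hence strictness in the full space $\mathbb{R}^{nk}$ is only available when $\widehat J=J$. So I will first argue that $\widehat J=J$ may be assumed, or else handle non-attractive components separately.

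\textbf{Step 1: local structure near $\bar x$.} Condition (i) says that every $\bar a^i$ has a unique nearest center $\bar x^{j(i)}$. That is, $\rho_F(\bar x^{j(i)}-\bar a^i)<\rho_F(\bar x^s-\bar a^i)$ for $s\neq j(i)$. This is a finite family of strict inequalities, and $\rho_F$ is continuous (Lemma~\ref{lemmarho0}(iv)). So there is $\delta>0$ such that for all $x\in\mathbb{B}(\bar x;\delta)$ the same strict inequalities hold. For each $j\notin\widehat J$, $\bar x^j$ is strictly farther from every $\bar a^i$ than $\bar x^{j(i)}$, and this also persists. Therefore, on $\mathbb{B}(\bar x;\delta)$,
\[
f(x,\bar a)=\sum_{j\in\widehat J}\phi_j(x^j),\qquad \phi_j(z)=\sum_{\bar a^i\in \bar A[\bar x^j]}\rho_F(z-\bar a^i).
\]

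\textbf{Step 2: strict convexity and strict minimality.} By (ii) and the strict convexity of $F$, Lemma~\ref{lemmaNam} shows that each $\phi_j$, $j\in\widehat J$, is strictly convex on $\mathbb{R}^n$. Since $\bar x$ is a local minimizer of the separable sum, each $\bar x^j$ is a local, hence global, minimizer of the convex function $\phi_j$. Strict convexity makes it the unique minimizer. Consequently, for $x\in\mathbb{B}(\bar x;\delta)$ with $x^j\neq\bar x^j$ for some $j\in\widehat J$, we get $f(x,\bar a)>f(\bar x,\bar a)$.

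\textbf{Step 3: the non-attractive components (main obstacle).} If $\widehat J\neq J$, $\bar x$ is not strict, so Theorem~\ref{thm:LSC_local} cannot be invoked verbatim. In that case I would redo the argument of Theorem~\ref{thm:LSC_local} only in the $\widehat J$-coordinates. For $a$ near $\bar a$, take $x_a$ whose $\widehat J$-components minimize $\sum_{j\in\widehat J}\min_{s\in \widehat J}$-type objectives over $\mathbb{B}[\bar x^{j};\varepsilon/2]$, and set $x_a^j=\bar x^j$ for $j\notin\widehat J$. The local structure from Step 1 persists under small perturbation of $a$ as well, because the inequalities are strict in $(x,a)$ jointly. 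So near $(\bar x,\bar a)$ we have $f(x,a)=\sum_{j\in\widehat J}\sum_{i: \bar a^i\in\bar A[\bar x^j]}\rho_F(x^j-a^i)$, independent of the $x^j$ with $j\notin\widehat J$.

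This makes the problem separable. Each block $\min_z\sum_{i}\rho_F(z-a^i)$ is convex, and its data remain non-collinear for $a$ near $\bar a$, so it is strictly convex with a unique minimizer $z_j(a)$. By the compactness and uniqueness argument of Theorem~\ref{thm:LSC_local}, $z_j(a)\to\bar x^j$ as $a\to\bar a$. Hence $x_a:=(z_j(a))_{j\in\widehat J}$ together with $\bar x^j$ for $j\notin\widehat J$ is a local minimizer of $f(\cdot,a)$ on a small neighborhood, lies in $V$, and belongs to $S_{loc}(a)$. This gives inner semicontinuity in general.

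The main care point is verifying that the partition structure of Step 1 is stable jointly in $(x,a)$, which follows from the finitely many strict inequalities and joint continuity (Proposition~\ref{proobjectivefuntion}).
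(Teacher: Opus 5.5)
Your proposal is correct, and it is more careful than the paper's own proof. Steps 1--2 are exactly the paper's argument:
\begin{itemize}
\item freeze the attraction sets near $\bar x$ using (i) and the continuity of $\rho_F$;
\item write $f(\cdot,\bar a)=\sum_{j\in\widehat J}\phi_j(x^j)$ locally;
\item get strict convexity of each $\phi_j$ from Lemma~\ref{lemmaNam};
\item conclude that $\bar x$ is a strict local solution and apply Theorem~\ref{thm:LSC_local}.
\end{itemize}

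Your Step 3 is where you depart from the paper, and there your route is the sound one. The paper asserts that $f(\cdot,\bar a)$, as a sum of strictly convex functions in independent variables, is strictly convex on $\mathbb{B}(\bar x;\varepsilon)$. But when $\widehat J\neq J$, $f(\cdot,\bar a)$ does not depend on the coordinates $x^j$ with $j\notin\widehat J$. A concrete case: take $F$ the Euclidean ball, $k=2$, $\bar A$ the vertices of an equilateral triangle, $\bar x^1$ its centroid and $\bar x^2$ far away. Then $\bar x$ is not a strict local solution, so Theorem~\ref{thm:LSC_local} cannot be invoked. The paper's proof therefore only covers $\widehat J=J$, even though the statement explicitly allows non-attractive centers.

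Your fix works: you use persistence of the finitely many strict inequalities jointly in $(x,a)$, then minimize block by block with the non-attractive centers frozen at $\bar x^j$. This argument proves the theorem in all cases and never needs Theorem~\ref{thm:LSC_local}. The paper's route is shorter because it reuses that theorem as a black box, but it is valid only when every center is attractive.

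When you write it up, tighten two points.
\begin{enumerate}
\item Replace the garbled ``$\min_{s\in\widehat J}$-type objectives'' by the clean block problems $\min_z\phi_j^a(z)$, where $\phi_j^a(z)=\sum_{i\in I_j}\rho_F(z-a^i)$ and $I_j=\{i\in I\mid \bar a^i\in\bar A[\bar x^j]\}$.
\item Justify that the minimizers $z_j(a)$ exist and stay bounded. At a minimizer, Lemma~\ref{lemmarho0}(vii) gives $\|z_j(a)-a^i\|\le\|F\|\rho_F(z_j(a)-a^i)\le\|F\|\phi_j^a(\bar x^j)$, and the right-hand side is bounded for $a$ near $\bar a$. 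Alternatively, minimize over $\mathbb{B}[\bar x^j;\varepsilon/2]$ as in Theorem~\ref{thm:LSC_local} and use that an interior minimizer of a convex function is global.
\end{enumerate}
You do not need uniqueness of $z_j(a)$ for $a\neq\bar a$. What forces $z_j(a)\to\bar x^j$ is uniqueness of the minimizer of $\phi_j^{\bar a}$, which you already obtained in Step 2.
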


	\begin{proof}
		Since $\bar x$ is a local optimal solution of problem~\eqref{maxminn} at $\bar a$, there exists $\varepsilon_1>0$ such that
		$$f(\bar x,\bar a)\le f( x,\bar a)\;\text{ for all } x\in \mathbb{B}(\bar{x};\varepsilon_1).$$
		 Let us show that $\bar x$ is a strict local optimal solution of the GMWP at $\bar a$.
		
		By (i) and the definition of the attraction set $\bar A[\bar x^j]$, for every $j\in\widehat{J}$ we have
		\begin{equation}\label{ineq:strict_attraction}
			\rho_F(\bar x^j-\bar a^i)
			<\min_{l\in J\setminus\{j\}}\rho_F(\bar x^l-\bar a^i)\;\text{ for all } \bar a^i\in\bar A[\bar x^j],
		\end{equation} where the convention $\min\emptyset=+\infty$ is used. Meanwhile, for every $j\in J\setminus \widehat{J}$,  we have
		\begin{equation}\label{ineq:strict_attraction1}
			\rho_F(\bar x^j-\bar a^i)
			>\min_{l\in J\setminus\{j\}}\rho_F(\bar x^l-\bar a^i)\;\text{ for all }\bar a^i\in \bar A.
		\end{equation}
		By the continuity of $\rho_F$ (see Lemma~\ref{lemmarho0}(iv)), the strict inequalities in \eqref{ineq:strict_attraction} and \eqref{ineq:strict_attraction1} remain valid in a sufficiently small neighborhood of each $\bar x^j$.
		Therefore, there exists $\varepsilon_2>0$ such that when $\bar{x}=(\bar{x}^1, \dots, \bar{x}^k)$ moves to any new facilities system
		$x=(x^1,\dots,x^k)\in \mathbb B(\bar x;\varepsilon_2)$, the attraction sets remain unchanged, namely,
		\[
		A[x^j]=\bar A[\bar x^j]\quad \text{for all } j\in J.
		\]
 Hence, the objective function admits the local representation
		$$f(x,\bar a)=\sum_{j \in \widehat J} \phi_j(x^j)\;\text{ for all } x\in \mathbb{B}(\bar{x};\varepsilon_2), 
		$$
		where $$\phi_j(x^j) = \sum_{\bar a^i \in A[\bar x^j]} \rho_F(x^j - \bar a^i).$$
		Under assumption (ii), it follows from  Lemma~\ref{lemmaNam} that for every $j\in \widehat J$, $\phi_j$ is strictly convex on
		$\mathbb B(\bar x^j;\varepsilon_2)$.
		Since $f(\cdot,\bar a)$ is a sum of strictly convex functions
		$\phi_j$ in mutually independent variables $x^j$, 
		$f(\cdot,\bar a)$ is also strictly convex on $\mathbb{B}(\bar{x};\varepsilon)$ with $\varepsilon=\min\{\varepsilon_1,\varepsilon_2\}$.
		Note that a local optimal solution of a strictly convex function is necessarily a strict local optimal solution, see, e.g., \cite[p. 137]{boyd}. Therefore, $\bar{x}$ is a strict local optimal solution to the GMWP at $\bar{a}$. 
		
		 Now, it is clear that all the assumptions of Theorem~\ref{thm:LSC_local} are  satisfied. By this theorem, we conclude that the local solution mapping $S_{loc}$ is inner semicontinuous at $(\bar a, \bar x)$. The proof is complete.
	\end{proof}

	\begin{remark}
		It is worth emphasizing that Theorem~\ref{main_result_3} extends
		Theorem~5.1 in~\cite{Cuong2024} by replacing the Euclidean norm with a more
		general Minkowski function framework. Furthermore, our approach employs a different proof technique and indirectly provides answers to two open questions raised in \cite{Cuong2024}. Specifically, we are able to remove the assumption that $\bar{a}^i \neq \bar{a}^p$ for $i \neq p$, as well as condition (ii) of Theorem 5.1 in \cite{Cuong2024}.
	\end{remark}

	The final example of this paper addresses Question 2 raised in \cite{Cuong2024}. Specifically, it shows that condition (ii) of Theorem 5.1 in \cite{Cuong2024} is not a necessary requirement for the inner semicontinuity of the local solution mapping.
	
	\begin{example}
		Let $k=2$ and $$\bar A = \{ \bar a^1=(-1,0), \bar a^2=(1,0), \bar a^3=(0,\sqrt{3}), \bar a^4=(100,0) \}.$$ 
		Let $\rho_F$ be the Euclidean norm. Consider the multi-source Weber problem:
		\begin{equation}\label{phanvd}
			f(\bar a) = \min_{x^1, x^2 \in \mathbb{R}^2} \sum_{i=1}^4 \min_{j \in \{1,2\}} \|x^j - \bar a^i\|.    
		\end{equation}
		A direct computation shows that for $\epsilon=1/100$, the point $$\bar{x}=(\bar x^1,\bar x^2) = \left( \left(0, \frac{\sqrt{3}}{3}\right), (100, 0) \right)$$ is a local optimal solution of \eqref{phanvd}. Moreover, It is straightforward to verify that conditions (i) and (ii) of Theorem \ref{main_result_3} are satisfied. By this theorem, the local solution mapping $S_{loc}$ of \eqref{phanvd} is inner semicontinuous at $(\bar a,\bar x).$ However, observe that $\bar x^2\in \bar A$. This confirms that condition (ii) of Theorem 5.1 in \cite{Cuong2024}, which requires $\bar x^j \notin \bar{A}$ for all $j=1,2$, is not essential for the result to hold.
	\end{example}
	
	\section{Conclusions}\label{sec6}
	
  Using the Minkowski function, we have provided a detailed stability analysis of the generalized  multi-source Weber problems. For future research, it would be of interest to utilize the explicit Lipschitz constants derived in this work to develop robust numerical algorithms. Additionally, exploring the stability of these problems in infinite-dimensional spaces or under more complex constraints remains a promising direction.
	
\end{document}